\documentclass[article,12pt]{amsart}
\pdfoutput=1
\usepackage[utf8]{inputenc}
\usepackage{amsmath,amssymb,amsfonts,stmaryrd,amsthm,enumerate,graphicx}

\usepackage[backend=bibtex, style=numeric-comp,firstinits=true]{biblatex}

\bibliography{references.bib}
\usepackage[T1]{fontenc}
\usepackage{lineno,hyperref}
\usepackage{graphicx}
\usepackage{sidecap}
\usepackage{xspace}
\usepackage{pifont}
\usepackage{cancel}
\usepackage{soul}
\newcommand{\vertiii}[1]{{\left\vert\kern-0.25ex\left\vert\kern-0.25ex\left\vert #1 
    \right\vert\kern-0.25ex\right\vert\kern-0.25ex\right\vert}}
\newtheorem{theorem}{Theorem}[section]
\newtheorem{lem}[theorem]{Lemma}
\newtheorem{algorithm}[theorem]{Algorithm}

\usepackage{xcolor}
\usepackage{booktabs}
\usepackage{pgfplots}
\usepackage{pgfplotstable}
\usepackage{tikz}
\usepackage{tkz-euclide}
\usetikzlibrary{decorations.pathreplacing,shapes,arrows,shapes.misc,calc,intersections,arrows,external}
\usepgfplotslibrary{external}
\modulolinenumbers[5]

\usepackage{geometry}
\geometry{a4paper,
                lmargin=1.1in, 
                rmargin=1.1in}
                
\begin{document}
\author[S. Giani]{Stefano Giani}
\address[]{Durham University, Stockton Road, Durham, DH1 3LE UK}
\email{stefano.giani@durham.ac.uk}
 
\author[L. Grubišić]{Luka Grubišić}
\address[]{University of Zagreb 
Bijenička 30 
10000 Zagreb, HR}
\email{luka.grubisic@math.hr}

\author[L. Heltai]{Luca Heltai}
\address[]{Scuola Internazionale Superiore di Studi Avanzati, Via Bonomea 265, 34136 Trieste, IT} 
\email{luca.heltai@sissa.it}

\author[O. Mulita]{Ornela Mulita}
\address[]{Scuola Internazionale Superiore di Studi Avanzati, Via Bonomea 265, 34136 Trieste, IT} 
 \email{omulita@sissa.it}
 
\title[SA-PINVIT for elliptic eigenvalue problems]{Smoothed-adaptive perturbed inverse iteration for elliptic eigenvalue problems}

\begin{abstract}
We present a perturbed subspace iteration algorithm to approximate the lowermost eigenvalue cluster of an elliptic eigenvalue problem. As a prototype, we consider the Laplace eigenvalue problem posed in a polygonal domain. The algorithm is motivated by the analysis of inexact (perturbed) inverse iteration algorithms in numerical linear algebra. We couple the perturbed inverse iteration approach with mesh refinement strategy based on residual estimators. We demonstrate our approach on model problems in two and three dimensions.
\end{abstract}
\keywords{\footnotesize{elliptic eigenvalue problem,  inexact perturbed inverse iteration, mesh adaptation, mesh construction, Laplace operator, inexact solve.}}

\maketitle

\section{Introduction}\label{sec:introduction}
\subsection{Motivation and literature}\label{subsec:motivation_literature} 

We present an eigensolver designed for adaptive finite element methods (AFEM), which increases the efficiency of classical algorithms when applied to a sequence of locally refined meshes, by carefully reducing the accuracy at which eigenproblems in intermediate levels are solved. As an a posteriori error estimator we use a standard residual error estimator such as those from \cite{carstensen2011oscillation}. The computational costs of adaptive eigenvalue solvers are mostly dominated by the cost of the iterative algebraic eigenvalue solver, and any method which can generate a sequence of meshes by saving accurate solves in all loops, while maintaining the overall accuracy, has a potential for increased efficiency.

 The method which we propose is motivated by the work on inexact inverse iteration solvers in numerical linear algebra \cite{Oliveira,Saad16} and recent work in the perturbed iterative methods for source problems \cite{mulita2019quasi}. In the finite element community, perturbed eigenvalue solvers have been used in the context of numerical homogenization and two level methods. There the aim is to try to achieve fine level accuracy more efficiently by using coarse level solves as a form of preconditioning \cite{malqvist2012computation,Xu2001}. Also, for the work on convergence of preconditioned eigenvalue solvers for uniformly refined meshes see \cite{neymeyr2006geometric,knyazev2001toward}.

Eigenvalue problems typically require the solution -- as intermediate steps -- of highly ill conditioned source problems. Here we mention that even a small algebraic residual does not guarantee a good accuracy of the resulting solution, neither for linear systems nor for eigenvalue problems \cite{mikedlar2015story, hestenes1952methods,arioli2004stopping}. On the other hand, solving the linear algebraic problems to a (much) higher accuracy than the order of the discretization error not only does not improve the overall accuracy but also significantly increases the computational cost \cite{gockenbach2006understanding}. These reasons make the study of the algebraic error an integral part of the adaptive FEM.  

Historically, the majority of the AFEM publications has considered exact solutions of the algebraic problems. However, recent developments of many authors dedicate a great deal of effort to account for inexactness of the algebraic approximations and introduce stopping criteria based on the interplay between discretization and algebraic computation in adaptive FEM. Among others, we mention the seminal contributions for boundary value problems \cite{becker1995adaptive, arioli2004stopping, jiranek2010posteriori, arioli2013stopping, arioli2013interplay, ern2013adaptive, papevz2018estimating, miracci2020multilevel, daniel2020guaranteed, MALLIK2020112367, DANIEL2020112607}.

This becomes a much more complicated task for eigenvalue problems which, by their nature, are nonlinear. Incorporating the algebraic error in adaptive eigenvalue solvers is no new development; see \cite{mehrmann2011adaptive,mehrmann2011nonlinear,neymeyr2002posteriori,mikedlar2011inexact}. When dealing with inexact AFEM, issues such as convergence and optimality are of even greater interest; see  \cite{giani2009convergent,garau2009convergence,carstensen2011oscillation,dahmen2008adaptive,dai2008convergence,garau2011convergence}. For an adaptive finite element method with asymptotic saturation for eigenvalue problems see \cite{carstensen2014adaptive}.

The preconditioned inverse iteration \cite{knyazev2003geometric, neymeyr2001geometric, neymeyr2001geometricc, neymeyr2006geometric} is a well-established iterative method
which admits quasi-optimal computational complexity on uniform meshes. The perturbed preconditioned inverse
iteration uses approximation application operators. Its convergence was proved in \cite{binev2004adaptive}, where the bounds for the convergence rate depend on the eigenvalue gap and the quality of the preconditioner. In \cite{rohwedder2011perturbed} the authors consider a perturbed preconditioned inverse
iteration for operator eigenvalue problems with applications to adaptive wavelet discretization, by  exploiting the theory of best $N$-term approximation to prove optimality of AFEM. 

In \cite{carstensen2012adaptive} the authors present the first adaptive finite element eigenvalue solver (AFEMES) of overall asymptotic quasi-optimal complexity for both exact and inexact algebraic approximations, i.e., for sufficiently small mesh-sizes the error is optimal up to a generic multiplicative constant. In particular, under the assumption that the iteration error for two consecutive AFEM steps is small in comparison with the size of the residual a posteriori error estimate, they prove quasi optimality of the inexact inverse iteration coupled with adaptive finite element method for the class of selfadjoint elliptic eigenvalue problems. A similar analysis of convergence and a quasi-optimality of the inexact inverse iteration coupled with adaptive finite element methods was presented in \cite{zeiser2010optimality} for operator
eigenvalue problems.

Another important result in this direction is provided in \cite{carstensen2014guaranteed}, where an adaptive algorithm which monitors the discretisation error, the maximal mesh-size, and the algebraic eigenvalue error is presented. The authors prove fully computable two-sided bounds on the eigenvalues of the Laplace operator on arbitrarily coarse meshes and demonstrate the reliability of the guaranteed error control even with inexact solve of the algebraic eigenvalue problem.

For related results on non-selfadjoint elliptic eigenvalue problems that account for algebraic inexactness we refer to \cite{becker2001optimal,carstensen2011adaptive,gedicke2014posteriori, meidner2009goal, mikedlar2011inexact, rannacher2010adaptive}. 

By following a completely different approach, in \cite{mulita2019quasi, mulita2019thesis}, the authors introduce the smoothed adaptive FEM (S-AFEM), which improves the computational efficiency by mimicking the ascending phase of v-cycle multigrid methods. This highly interesting and promising fast solver is a novel idea that has been derived in the context of selfadjoint elliptic PDEs and has never been used for eigenvalue problems. The reason behind the success of this strategy is that classical residual-based a posteriori error estimators are not sensitive to low frequencies in the solution. Consequently, their application to very inaccurate approximate solutions in intermediate loops –only capturing high frequency oscillations through a smoother– produces an equally good grid refinement pattern in each loop, at a fraction of the computational cost. 

In the spirit of \cite{mulita2019quasi}, the eigenvalue solver that we propose can be seen as an extension of  the strategy of S-AFEM in the context of elliptic eigenvalue PDEs. 

\subsection{Outline}\label{subsec:outline} This paper is organized as follows. In Sect.~\ref{sec:methodology} we present the model problem together with its conforming FEM approximation, and briefly discuss the main ingredients of AFEM.  Sect.~\ref{sec:S-AFEMES} presents and analyses smoothed adaptive perturbed inverse iteration (SA-PINVIT), after briefly describing PINVIT. 
In Sect.~\ref{sec:numerical_experiments} we present numerical experiments that validate our strategy and finalize with some concluding remarks in Sect.~\ref{sec:summary}.

\section{Model problem and approximation}\label{sec:methodology}
Let us consider the symmetric Laplace eigenvalue problem: Seek a non-trivial eigenpair $(\lambda, u) \in \mathbb{R}\times  H^1_0(\Omega; \mathbb{R})$ such that
\begin{equation}\label{eq:laplace_model_problem}
    -\Delta u = \lambda u \quad\text{in $\Omega$} \quad\text{and} \quad u=0 \quad\text{on $\partial \Omega$},
\end{equation}
where $\Omega \subsetneq \mathbb{R}^d, \,d = 1,2,3$ is a bounded, connected polyhedral Lipschitz domain and $\partial \Omega$ is its boundary.
This simple -- but significant -- model problem will serve as a prototype elliptic, second-order, selfadjoint partial differential operator with a compact resolvent. We refer to the survey article~\cite{grebenkov2013geometrical} and to the references therein.

It is well known (see e.g., \cite{babuvska1991eigenvalue}) that problem~\eqref{eq:laplace_model_problem}
has countably many positive eigenvalues which do not have a finite accumulation point. The eigenvalues will be ordered increasingly
\begin{equation}
    0 < \lambda_1 < \lambda_2 \le \lambda_3 \le \dots
\end{equation}
where we count them according to their multiplicity. Furthermore, there exists an orthonormal basis $(u_1, u_2, u_3, \dots)$ of corresponding eigenvectors. Let us note that, in the case in which $\Omega$ is a connected domain, the eigenvalue $\lambda_1$ is a simple eigenvalue and one can choose an associated eigenvector $u_1$ as a positive function. In this paper we will consider both the case of a simple eigenvalue $\lambda_1$ as well as the case of a cluster of $r\in\mathbb{N}$ lowermost eigenvalues -- counting according to multiplicity -- which are separated by a strictly positive distance from the unwanted component of the spectrum. In Figure~1 
\begin{figure}[hbt!]
  \centering
  \includegraphics[width=.9\linewidth]{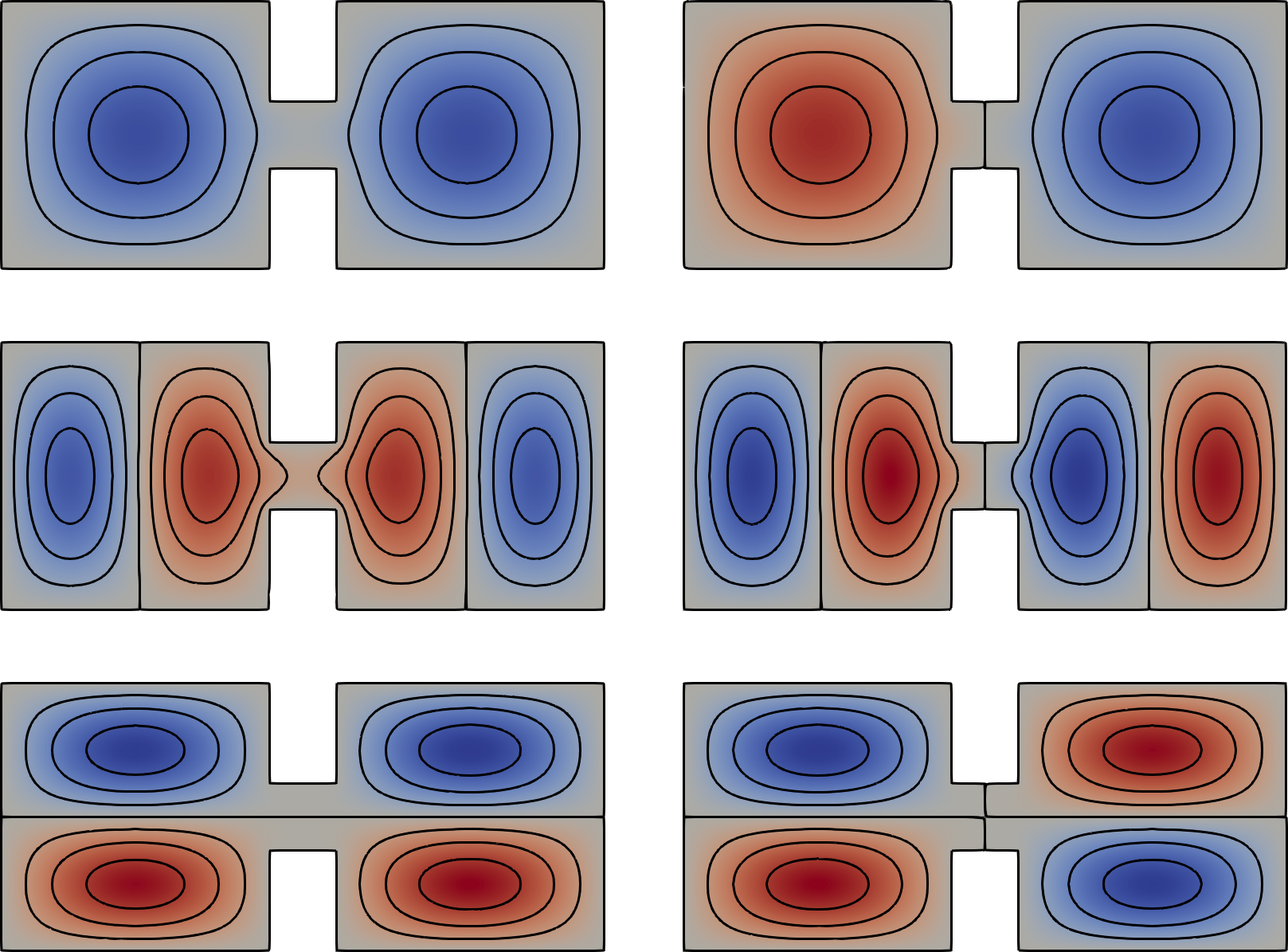}
  \caption{\footnotesize{Six lowermost eigenfunctions of the homogeneous Laplace problem on the dumbbell domain in 2D.}} \label{fig:eigenfunctions_dumbbell_mesh}
\end{figure}
we show the six lowermost eigenfunctions of the homogeneous Laplace problem on the dumbbell domain in 2D, that we will discuss in Sect.~\ref{subsec:dumbbell_2d}.

The weak problem seeks for a non-trivial eigenpair $(\lambda,u)\in \mathbb{R}\times \{V:=H^1_0(\Omega)\}$ with $b(u,u)=1$ and

\begin{equation}\label{eq:weak_laplace_model_problem}
    a(u,v)=\lambda b(u,v) \quad \text{for all $v \in V$.}
\end{equation}
The bilinear forms $a:V\times V \rightarrow \mathbb{R}$ and  $b: H\times H \rightarrow \mathbb{R}$ are defined by
\begin{equation}
    a(u,v):= \int_{\Omega} \nabla u \cdot \nabla v\, dx \quad \text{and} \quad  b(u,v):= \int_{\Omega} u v\, dx
\end{equation}
and induce norms $\vertiii{\bullet}:= | \bullet |_{H^1(\Omega; \mathbb{R})}$ on $V$ and $\|\bullet \|_{L^2(\Omega;\mathbb{R})}$ on $H:=L^2(\Omega;\mathbb{R})$. 

The well-established adaptive finite element routine (AFEM) \cite{binev2004adaptive,cascon2008quasi,dorfler1996convergent,stevenson2007optimality} computes a sequence of discrete subspaces
\begin{equation}
V_0 \subsetneq V_1\subsetneq V_2\subsetneq \dots \subsetneq V_{\ell} \subsetneq V
\end{equation}
using local refinement of the underlying mesh of the computational domain $\Omega$. The corresponding sequence of meshes consists of nested regular polygonal/polyhedral tessellation   $(\mathcal{T}_{\ell})_{\ell}$ in the sense of Ciarlet \cite{ciarlet2002finite} of the domain $\Omega$. The AFEM can be described as the following loop
\begin{equation}\label{afem_loop}
   \text{SOLVE $\rightarrow$ ESTIMATE $\rightarrow$  MARK $\rightarrow$ REFINE}.
\end{equation}

The phases of the AFEM routine are shortly described in what follows. Let $V_{\ell}:=P_m(\mathcal{T}_{\ell}) \cap V$ denote the finite-dimensional subspace of fixed order $m > 0$,
for the conforming finite element space $P_m(\mathcal{T}_{\ell})$ of polynomials of degree at most $m$ based on $\mathcal{T}_{\ell}$, and let $N_{\ell} :=
\text{dim}(V_{\ell})$. The corresponding discrete eigenvalue problem for each level reads: Seek a non-trivial eigenpair
$(\lambda_{\ell}, u_{\ell}) \in \mathbb{R} \times V_{\ell}$ with $b(u_{\ell}, u_{\ell}) = 1$ and
\begin{equation}\label{eq:discrete_model_problem}
    a(u_{\ell},v_{\ell})=\lambda_{\ell} b(u_{\ell},v_{\ell}) \quad \text{for all $v_{\ell}\in V_{\ell}$}.
\end{equation}

Given a mesh $\mathcal{T}_{\ell}$ on the level $\ell$, step SOLVE computes (assembles) the stiffness matrix $A_{\ell}$ and the mass
matrix $M_{\ell}$ and solves the $N_{\ell}$-dimensional \textit{generalized algebraic eigenvalue problem} associated to discrete problem~\eqref{eq:discrete_model_problem}
\begin{equation}\label{eq:generalized_eigenvalue_problem}
    A_{\ell}\mathbf{u}_{\ell} = \lambda_{\ell} M_{\ell} \mathbf{u}_{\ell}, 
\end{equation}
where 
\begin{equation}
    u_{\ell}=\sum_{i=1}^{N_{\ell}} \mathbf{u}_{\ell,i}\varphi^{(i)}_{\ell}, \quad
    A_{\ell}:= [a(\varphi^{(i)}_{\ell}, \varphi^{(j)}_{\ell})]_{1 \le i,j,\le N_{\ell}}, \quad M_{\ell}:= [b(\varphi^{(i)}_{\ell}, \varphi^{(j)}_{\ell})]_{1 \le i,j,\le N_{\ell}}
\end{equation}  
for $V_{\ell}=\text{span}\{\varphi^{(1)}_{\ell}, \dots, \varphi_{\ell}^{({N}_{\ell})}\}.$
In practice, these discrete eigenvalue problems are solved \textit{inexactly} using iterative algebraic eigenvalue solvers. Well-established iterative eigenvalue solvers that satisfy the convergence and complexity assumptions are the preconditioned inverse iteration (PINVIT) \cite{knyazev2003geometric} or the locally optimal (block) preconditioned conjugate gradient (LOBPCG) \cite{knyazev2001toward}. In this work we limit ourselves to use PINVIT itarations, and its application to adaptive eigenproblems. We refer to classical locally adaptive PINVIT with A-PINVIT, which consists on the applications of several loops of the type:
\begin{equation}\label{A-PINVIT_loop}
   \text{PINVIT $\rightarrow$ ESTIMATE $\rightarrow$  MARK $\rightarrow$ REFINE}.
\end{equation}

The final computation, in general, involves  the discretisation error as well as the algebraic error (in the eigenfunction and eigenvalue of interest) stemming from the termination of the iterative algebraic eigenvalue solver. In practice, the computational costs for the iterative algebraic eigenvalue solver dominate the overall computational costs.

\subsection{A residual error estimator for Galerkin approximations}
Step ESTIMATE uses a posteriori error estimators, which are computable quantities defined in terms of the discrete approximation that can estimate the actual error in a suitable norm. First, we review the results for finite element approximations which satisfy Galerkin optimality. We refer to  \cite{carstensen2004some} for the formal definition and for an in-depth description. We consider explicit residual-based a posteriori error estimators \cite{dai2008convergence, duran2003posteriori,garau2009convergence,giani2009convergent}.
Let $p_{\ell}$ denote the discrete gradient and $\mathcal{E}_{\ell}$ denote the set of inner edges ($d=2$) or inner faces $(d=3)$ of elements of $\mathcal{T}_{\ell}$. For $E \in \mathcal{E}_{\ell}$ let $T_{+}, T_{-} \in \mathcal{T}_{\ell}$ be two neighbouring elements such that $E=T_{+} \cap T_{-}$. The jump of the discrete gradient $p_{\ell}$ along an inner edge $E \in \mathcal{E}_{\ell}$ in normal direction $\nu_{E}$, pointing from $T_{+}$ to $T_{-}$, is defined by $[p_{\ell}] \cdot \nu_{E}:= (p_{\ell}\mid_{T_{+}} - p_{\ell}\mid_{T_{-}}) \cdot \nu_{E}.$
 The error in the eigenfunction or eigenvalue is estimated based on the solution $(\lambda_{\ell}, u_{\ell})$ of the underlying algebraic eigenvalue problem via the explicit residual-based a posteriori error estimator defined by

\begin{equation}\label{eq:residual_based_estimator}
    \begin{aligned}
    \eta^2(\lambda_{\ell}, u_{\ell}) &:= \sum_{T \in \mathcal{T}_{\ell} } \eta_{T}(\lambda_{\ell},u_{\ell})^2
    \end{aligned}
\end{equation}
with $d=2,3$ and
\begin{equation}
    \begin{aligned}
    J^2_E(\lambda_{\ell},u_{\ell})&:=|E|^{1/(d-1)} \|[p_{\ell}] \cdot \nu_E\|^2_{L^2(E)} \\
    \eta^2_{T}(\lambda_{\ell},u_{\ell}) & := |T|^{2/d}
     \| \lambda_{\ell} u_{\ell} + div(p_{\ell})\|^2_{L^2(T)} + \sum_{E \in \mathcal{E}_{\ell}} J^2_E(\lambda_{\ell},u_{\ell}).
    \end{aligned}
\end{equation}

Based on the local estimators, elements are marked for refinement in a bulk criterion \cite{dorfler1996convergent} such that $\mathcal{M}_{\ell} \subseteq \mathcal{T}_{\ell} \cup \mathcal{E}_{\ell}$ is an (almost) minimal set of marked edges with
\begin{equation}
\begin{aligned}
    \theta \eta^2(\lambda_{\ell}, u_{\ell}) &\le \eta^2 (\lambda_{\ell}, u_{\ell}; \mathcal{M}_{\ell}),\\
    \eta^2(\lambda_{\ell}, u_{\ell}; \mathcal{M}_{\ell})& := \sum_{T \in \mathcal{M}_{\ell} \cap \mathcal{T}_{\ell}} \eta^2_T (\lambda_{\ell}, u_{\ell})
\end{aligned}
\end{equation}
for a bulk parameter $0 <\theta \le 1$. Finally, the mesh is refined locally (REFINE) according to the set $\mathcal{M}_{\ell}$ of marked elements (MARK). 

\subsection{A posteriori error estimates for inexact eigenvector approximation}\label{res_sec}
Let us now briefly review the a posteriori error estimation techniques and results which we will use to argue the reliability of our approach to quasi optimal adaptive mesh construction. If cost of the estimation procedure were not of concern we would have used an auxiliary subspace error estimates such as those utilized in \cite{BGO,GrubisicOvall}. The important feature of these estimators is that for eigenvector approximations which satisfy the Galerkin orthogonality condition for the finite element space we can prove the reliability and the asymptotic exactness of the a posteriori error estimator for the lowermost cluster of eigenvalues.

In the case in which we do not assume that the approximate eigenvectors satisfy the Galerkin orthogonality, we have we have to use the more expensive technique based on duality residual estimates such as those used in \cite{GGO1}. Such estimators, although reliable and asymptotically robust, are to expensive to be used in the context of this paper.

To obtain an a posteriori error estimator which is relatively lightweight and sufficiently robust we will combine the eigenvalue estimator from \cite{GGMO} with the analysis and the estimator from \cite{StrakosPapez}. The analysis from \cite{GGMO} allows us to reduce the eigenvalue a posteriori error estimation to the error estimation for the source problem.

Let us recall the result from \cite{StrakosPapez}. Let the source problem
\begin{equation}\label{source}
-\triangle u = f,\qquad u\in H^{1}_0(\Omega; \mathbb{R})    
\end{equation}
be given and let $V_\ell\subset H^1_0(\Omega;\mathbb{R})$ be a finite element space (piece-wise polynomial) defined by the triangulation $\mathcal{T}_{\ell}$. We will use $u(f)$ to denote the solution of the problem \eqref{source}. It has been established that there exists a constants $C_{int}$ and $C_1$ such for $f\in V_\ell$ and $v_\ell\in V_\ell$
\begin{equation}\label{source2}
\|\nabla(u(f)-v_\ell)\|^2\leq C_1^2 \sum_{E \in \mathcal{M}_{\ell} \cap \mathcal{E}_{\ell}} |E|^{1/(d-1)} \|[\frac{\partial v_{\ell}}{\nu_E}]\|^2_{L^2(E)} + C_{int}^2\|\nabla(u_\ell(f)-v_\ell(f))\|,
\end{equation}
where $[\frac{\partial v_{\ell}}{\nu_E}]$ is the jump of the discrete gradient of $v_{\ell}$ in the direction of the normal of the edge of an element. Note that in the case of a general $f\in\ L^2(\Omega)$, there is an additional data oscillation term in the estimate from \cite{StrakosPapez}, which is not present for $f\in V_\ell$. Further, $u_{\ell}(f)$ will denote the Galerkin approximation of $u(f)$ which satisfies
$$
u_\ell(f)=\mathrm{arg min}\{a(v,v)/2-b(f,v)~:~v\in V_\ell\}~.
$$
Finally we allow for $v_h\in V_h$ to be arbitrary. The constants $C_1$ and $C_{int}$ do not depend on $f$ but do depend on $V_\ell$ and $\mathcal{T}_\ell$. The constant $C_{int}$ is defined as
$$
C_{int}^2=\sup_{f\in L^2(\Omega)}\sup_{v_\ell\in V_\ell}\frac{\|\nabla \mathcal{I}(u(f)-v_\ell)\|}
{\|\nabla(u(f)-v_\ell)\|}~,
$$
where $\mathcal{I}:L^1(\Omega)\to V_h$ is the quasi-interpolation operator for $V_\ell$.
Obviously, $C_{int}$ is quite pessimistic and we will argue that for the type of right hand sides which we chose it can be much smaller. Similar argument was used in \cite[Theorem 2.1]{PAMM_Gru} and we will refine this analysis with a more refined handling of the computable residual estimates. We will combine \cite[Theorem 2.1]{PAMM_Gru} with the analysis from \cite[Theorem 3.8.]{BGO} and \cite[Estimates (35) and (36)]{GGMO} to obtain a computable residual estimate.

Let us introduce some notation. Given linearly independent vectors $v_\ell^{(i)}\in V_\ell$, $i=1, \cdots, r$ we define the space $
\mathcal{V}_\ell=\mathrm{span}\{v_\ell^{(1)},\cdots,v_\ell^{(r)}\}$
and the associated Ritz values 
$$
\tilde{\lambda}_i=\min\{\max\{a(v,v)/b(v,v)~:~v\in U\setminus\{0\}\}~:~U\subset\mathcal{V}_\ell,\;\dim(U)=i\}
$$
and let $\tilde{v}_i\in\mathcal{V}_\ell$ be an linearly independent set of vector such that $\tilde{\lambda}_i=a(\tilde{v}_i,\tilde{v}_i)/b(\tilde{v}_i,\tilde{v}_i)$. The vector $\tilde{v}_i$
are called the Ritz vectors from $\mathcal{V}_\ell$.

\begin{theorem}
Let linearly independent $v_\ell^{(i)}\in V_\ell$, $i=1, \cdots, r$, $r\in\mathbb{N}$ and the associated $\mathcal{V}_\ell$ be a given.
If $\lambda_r<\lambda_{r+1}$, $r=\mathrm{arg min}_{i\in\mathbb{N}}\frac{|\lambda_i-\tilde\lambda_r|}{\sqrt{\lambda_i\tilde{\lambda}_r}}$
and $\lambda_r<\min\{a(v,v)/b(v,v)~:~v\perp\mathcal{V}_\ell\}$, then
$$\aligned
\sum_{i=1}^r\frac{|\lambda_i-\tilde{\lambda}_i|}{\lambda_i}\leq\min\big\{ C_{{\rm clust}}(\mathcal{V}_\ell)
&\sum_{i=1}^r\|\nabla (u(\tilde{v}_i) -\tilde{\lambda_i}^{-1}\tilde{v}_i)\|^2,\\&\quad
r\max_i\|\nabla (u(\tilde{v}_i) -\tilde{\lambda_i}^{-1}\tilde{v}_i)\|\big\}.
\endaligned$$
Here $C_{{\rm clust}}(\mathcal{V}_\ell)=\frac{\lambda_{r+1}+\tilde{\lambda}_i}{|\lambda_{r+1}-\tilde{\lambda}_i|}$ is the spectral gap.
\end{theorem}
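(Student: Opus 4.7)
The plan is to reinterpret $\|\nabla(u(\tilde v_i)-\tilde\lambda_i^{-1}\tilde v_i)\|$ as an operator residual and then apply two complementary Ritz perturbation bounds whose minimum produces the claim. The first key observation is that the solution operator $T:=(-\Delta)^{-1}: L^2(\Omega)\to H^1_0(\Omega)$ of~\eqref{source} satisfies $u(\tilde v_i)=T\tilde v_i$, so $(\tilde\lambda_i^{-1},\tilde v_i)$ would be an exact eigenpair of $T$ if and only if $\tilde v_i$ were an exact eigenvector of the original Laplace problem. Consequently $T\tilde v_i-\tilde\lambda_i^{-1}\tilde v_i$ is the operator residual in the selfadjoint eigenvalue formulation on $H^1_0$, and its energy norm is the natural quantity controlling the Ritz eigenvalue error via standard spectral perturbation theory.

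To obtain the first branch of the minimum with the cluster constant $C_{\rm clust}(\mathcal{V}_\ell)$, I would invoke a Kato--Temple / cluster-robust bound of the kind derived in~\cite[Thm.~3.8]{BGO} together with~\cite[Estimates (35)--(36)]{GGMO}. The hypothesis $\lambda_r<\lambda_{r+1}$ together with $\lambda_r<\min\{a(v,v)/b(v,v):v\perp\mathcal{V}_\ell\}$ is precisely what is needed so that, by a Courant--Fischer argument, each Ritz value $\tilde\lambda_i$ with $i\le r$ lies strictly below $\lambda_{r+1}$, keeping the wanted cluster separated from the remainder of the spectrum. In this regime the Kato--Temple inequality applied to $T$ yields, for each Ritz pair, a \emph{quadratic} estimate of the form $|\lambda_i-\tilde\lambda_i|/\lambda_i\lesssim C_{\rm clust}(\mathcal{V}_\ell)\,\|\nabla(u(\tilde v_i)-\tilde\lambda_i^{-1}\tilde v_i)\|^2$; summing over $i=1,\ldots,r$ produces the first bound.

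For the second branch, a cruder residual-only estimate that is linear in the residual norm and requires no spectral gap suffices; this is the analogue of the Bauer--Fike-type bound already exploited in~\cite[Thm.~2.1]{PAMM_Gru}. Applied to each Ritz pair it gives $|\lambda_i-\tilde\lambda_i|/\lambda_i\lesssim\|\nabla(u(\tilde v_i)-\tilde\lambda_i^{-1}\tilde v_i)\|$, and the trivial inequality $\sum_i a_i\le r\max_i a_i$ then yields the factor $r\max_i\|\nabla(u(\tilde v_i)-\tilde\lambda_i^{-1}\tilde v_i)\|$. Taking the smaller of the two estimates delivers the displayed minimum.

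The hard part will be the bookkeeping of exact constants. Specifically, one must justify that the per-index Kato--Temple factor $(\lambda_{r+1}+\tilde\lambda_i)/|\lambda_{r+1}-\tilde\lambda_i|$ can be aggregated into a single $C_{\rm clust}(\mathcal{V}_\ell)$ that is uniform across the cluster (e.g.\ by taking the maximum over $i$, which the min-Rayleigh-quotient hypothesis keeps finite), and that the source right-hand side $\tilde v_i\in V_\ell$ is used so that the clean, data-oscillation-free form of~\eqref{source2} from~\cite{StrakosPapez} applies. Once the two perturbation inequalities are in place with the sharp constants stated in the theorem, the summation and maximization step is purely algebraic.
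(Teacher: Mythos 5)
Your proposal matches the paper's proof, which is a one-line citation combining \cite[Theorem 2.1]{PAMM_Gru}, \cite[Estimates (35) and (36)]{GGMO}, and the Bauer--Fike estimate \cite[Proposition 11]{GGMO}: you correctly identify the operator-residual reinterpretation of $\|\nabla(u(\tilde v_i)-\tilde\lambda_i^{-1}\tilde v_i)\|$ for $T=(-\Delta)^{-1}$ and the two complementary perturbation bounds, a quadratic cluster-robust Kato--Temple-type estimate and a linear Bauer--Fike-type estimate, whose minimum gives the claim. The only slight bookkeeping mismatch is that you attribute the Bauer--Fike branch to \cite[Theorem 2.1]{PAMM_Gru} and the quadratic branch to \cite[Theorem 3.8]{BGO}, whereas the paper draws the Bauer--Fike ingredient from \cite[Proposition 11]{GGMO} and invokes \cite{BGO} only for the subsequent computable estimate in Theorem~\ref{sep}; this does not change the structure of the argument, and your caveat about aggregating the per-index factor $(\lambda_{r+1}+\tilde\lambda_i)/|\lambda_{r+1}-\tilde\lambda_i|$ into a single $C_{\rm clust}(\mathcal{V}_\ell)$ is a legitimate point that the paper leaves implicit.
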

\begin{proof}
The proof of this result is a direct combination of \cite[Theorem 2.1]{PAMM_Gru} and \cite[Estimates (35) and (36)]{GGMO} together with the Bauer-Fike residual estimate \cite[Proposition 11]{GGMO}.
\end{proof}
For an analogous finite dimensional version of this theorem we point a reader to \cite[Theorem 4.1]{DrmBosner}. There we can also find a discussion as to when we can relax an apparently stringent restriction on the location of the Ritz values from $\mathcal{V}_\ell$.

The constant $C_{int}$ is a general constant depending on the domain $\Omega$ the subspace $V_\ell$. However, it is to pessimistic in general -- as has been argued in \cite{StrakosPapez} -- and in particular it is so in the case of its application in the analysis of the eigenvalue problem. In the spirit in which the saturation assumption was handled in \cite{PAMM_Gru}, we define the subspace dependent constant 
$$
C_{int}^2(\mathbf{e})=
\sup_{\substack{f\in \mathcal{V}_\ell,\\(f,u)_{L^2}/(u,u)_{L^2}\leq\mathcal{E}}}\sup_{v_\ell\in V_\ell}\frac{\|\nabla \mathcal{I}(u(f)-v_\ell)\|}
{\|\nabla(u(f)-v_\ell)\|}.
$$
Obviously, $C_{int}(\mathcal{V}_\ell)\leq C_{int}$ and so we see that for judiciously constructed subspaces $\mathcal{V}_\ell$ we can achieve much tighter error control by the discrete residual. Such will be the case when choosing $\mathcal{V}_\ell$ as a prolongation of Galerkin eigenvector approximation from a sufficiently fine coarse mesh.

Now, as a direct consequence of the fore mentioned we have the following commutable reliability estimate.
\begin{theorem}\label{sep}
Let linearly independent $v_\ell^{(i)}\in V_\ell$, $i=1, \cdots, r$, $r\in\mathbb{N}$ and the associated $\mathcal{V}_\ell$ be a given.
If $\lambda_r<\lambda_{r+1}$, $r=\mathrm{arg min}_{i\in\mathbb{N}}\frac{|\lambda_i-\tilde\lambda_r|}{\sqrt{\lambda_i\tilde{\lambda}_r}}$
and $\lambda_r<\min\{a(v,v)/b(v,v)~:~v\perp\mathcal{V}_\ell\}$, then
$$\aligned
\sum_{i=1}^r\frac{|\lambda_i-\tilde{\lambda}_i|}{\lambda_i}\leq \min\Big\{&C_{{\rm clust}}(\mathcal{V}_\ell)\big(
C_1^2\sum_{i=1}^r J^2_\ell(\tilde{\lambda}_i^{-1}\tilde{v}^{(i)}) + C_{int}^2\sum_{i=1}^r\|\nabla(u_\ell(\tilde{v}^{(i)})-\tilde{\lambda_i}^{-1}\tilde{v}^{(i)})\|^2\big),\\
&\;\;\sqrt{C_1^2\sum_{i=1}^rJ^2_\ell(\tilde{\lambda}_i^{-1}\tilde{v}^{(i)}) + C_{int}^2\sum_{i=1}^r\|\nabla(u_\ell(\tilde{v}^{(i)})-\tilde{\lambda_i}^{-1}\tilde{v}^{(i)})\|^2}\Big\}~.
\endaligned$$
Where $ J^2_\ell(\tilde{\lambda}_i^{-1}\tilde{v}^{(i)})=\tilde{\lambda_i}^{-1}\sum_{E \in \mathcal{M}_{\ell} \cap \mathcal{E}_{\ell}} |E|^{1/(d-1)} \|[\frac{\partial \tilde{v}^{(i)}}{\nu_E}]\|^2_{L^2(E)}$ is the discrete residual measure and we call $\|\nabla(u_\ell(\tilde{v}^{(i)})-\tilde{\lambda_i}^{-1}\tilde{v}^{(i)})\|^2$ the algebraic error.
For any $\mathbf{e}$, $\mathbf{e}\geq\lambda_r$ we can substitute the constant $C_{int}^2(\mathbf{e})$ for $C_{int}^2$ and the estimate still holds.
\end{theorem}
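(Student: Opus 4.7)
The plan is to view Theorem \ref{sep} as the computable consequence of the previous theorem once the unknown quantity $\|\nabla(u(\tilde v_i)-\tilde\lambda_i^{-1}\tilde v_i)\|$ is replaced by the mesh-computable residual surrogate provided by \eqref{source2}. The previous theorem already delivers the two structural bounds
\[
\sum_{i=1}^r\frac{|\lambda_i-\tilde\lambda_i|}{\lambda_i}
\le\min\Bigl\{C_{\rm clust}(\mathcal V_\ell)\sum_{i=1}^r\|\nabla(u(\tilde v_i)-\tilde\lambda_i^{-1}\tilde v_i)\|^2,\;
r\max_i\|\nabla(u(\tilde v_i)-\tilde\lambda_i^{-1}\tilde v_i)\|\Bigr\},
\]
so the work reduces to estimating each of the $r$ energy norms of $u(\tilde v_i)-\tilde\lambda_i^{-1}\tilde v_i$ term-by-term, with no further eigenvalue information needed.

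The central step is to apply the a posteriori source-problem estimate \eqref{source2} of Papež--Strakoš for every $i=1,\dots,r$ with the data $f=\tilde v_i$ (which lies in $V_\ell$, because $\mathcal V_\ell\subset V_\ell$) and with the test element $v_\ell=\tilde\lambda_i^{-1}\tilde v^{(i)}\in V_\ell$. The scalar $\tilde\lambda_i^{-1}$ pulls out of the jump term and produces exactly the quantity denoted $J_\ell^2(\tilde\lambda_i^{-1}\tilde v^{(i)})$ in the statement, while the Galerkin-surrogate term becomes $\|\nabla(u_\ell(\tilde v^{(i)})-\tilde\lambda_i^{-1}\tilde v^{(i)})\|^2$. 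Summing these $r$ inequalities and inserting the result into the first branch of the previous theorem yields the first entry of the min in Theorem \ref{sep}. For the second entry, I would use $\max_i x_i\le\sqrt{\sum_i x_i^2}$ to turn the max into a square root of the same sum of residuals plus algebraic errors; the combinatorial factor $r$ is absorbed into the square root via the same inequality, giving the stated $\sqrt{\,\cdot\,}$ bound.

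For the substitution of $C_{int}^2(\mathbf e)$ for $C_{int}^2$, I would observe that the Ritz vectors $\tilde v^{(i)}\in\mathcal V_\ell$ serve as right-hand sides in the supremum defining $C_{int}(\mathbf e)$, and that the required Ritz quotient bound $(f,u)_{L^2}/(u,u)_{L^2}\le\mathbf e$ is met for $\mathbf e\ge\lambda_r$ by the standard min--max characterization of the leading Ritz values together with the gap hypothesis $\lambda_r<\min\{a(v,v)/b(v,v):v\perp\mathcal V_\ell\}$. Because the derivation in step two only uses \eqref{source2} on the specific right-hand sides $\tilde v^{(i)}$, the more restrictive interpolation constant applies and the proof runs verbatim.

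The main technical obstacle is bookkeeping rather than analysis: one has to check that the scaling by $\tilde\lambda_i^{-1}$ is handled consistently in the jump term and in the algebraic error, and that the assumptions of the previous theorem (the gap $\lambda_r<\lambda_{r+1}$, the $\arg\min$ identification of $r$, and the separation hypothesis on the orthogonal complement of $\mathcal V_\ell$) carry through unchanged — which they do since Theorem \ref{sep} inherits exactly the same hypotheses. The rest is a direct substitution combining \cite[Theorem 2.1]{PAMM_Gru}, \cite[Theorem 3.8]{BGO} and \cite[Estimates (35) and (36)]{GGMO}, exactly as announced in the proof of the previous theorem.
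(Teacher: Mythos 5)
Your plan reproduces the paper's own proof almost verbatim: both start from the preceding theorem's two-branch bound, both invoke the Papež--Strakoš estimate \eqref{source2} on the auxiliary source problems $-\Delta u=\tilde v_i$ with the test function $\tilde\lambda_i^{-1}\tilde v^{(i)}\in V_\ell$ (the paper's ``$\tilde\lambda_i\tilde v_i$'' is a typographical slip for $\tilde\lambda_i^{-1}\tilde v_i$, and you correctly use the inverse), and both appeal to \cite[Inequality (4.5)]{StrakosPapez} for the sharpened constant $C_{int}^2(\mathbf{e})$. Your explicit identification of $f=\tilde v_i$ and $v_\ell=\tilde\lambda_i^{-1}\tilde v^{(i)}$, and the remark that $\tilde v_i\in\mathcal V_\ell\subset V_\ell$ keeps the data-oscillation term from appearing, is a useful unpacking of what the paper leaves implicit.

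There is, however, one step you assert that does not hold as stated: you claim the combinatorial factor $r$ ``is absorbed into the square root via the same inequality'' $\max_i x_i\le\sqrt{\sum_i x_i^2}$. That inequality bounds $\max_i x_i$, not $r\max_i x_i$; the factor $r$ survives and is not absorbed. Concretely, with $B_i$ denoting the per-index bound supplied by \eqref{source2}, the second branch of the preceding theorem gives only $r\max_i x_i\le r\sqrt{\sum_i B_i}$, which is $r$ times what Theorem \ref{sep} asserts. Since $\sqrt{\sum_i B_i}$ may in fact be smaller than $r\max_i x_i$, the square root alone is not a legitimate upper bound for the quantity one starts from. This discrepancy appears to originate in the paper's own theorem statement and terse one-line proof (the paper never spells out how the factor $r$ disappears), so the point is worth flagging to the authors rather than a defect in your strategy; but your attempt to close the gap by invoking the $\max$--$\sqrt{\sum}$ inequality is not a valid fix. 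Relatedly, you are right to flag the bookkeeping around the $\tilde\lambda_i^{-1}$ scaling: the definition of $J^2_\ell(\tilde\lambda_i^{-1}\tilde v^{(i)})$ in the statement carries only $\tilde\lambda_i^{-1}$, while the squared jump of $\tilde\lambda_i^{-1}\tilde v^{(i)}$ scales as $\tilde\lambda_i^{-2}$; this is another place where the statement and the intended substitution appear to disagree by a benign factor.
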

\begin{proof}
Under the assumptions of the theorem, the Ritz values from the subspace $\mathcal{V}_\ell$ verify
$$\aligned
\sum_{i=1}^r\frac{|\lambda_i-\tilde{\lambda}_i|}{\lambda_i}\leq\min\big\{ C_{{\rm clust}}(\mathcal{V}_\ell)
&\sum_{i=1}^r\|\nabla (u(\tilde{v}_i) -\tilde{\lambda_i}^{-1}\tilde{v}_i)\|^2,\\&\quad
r\max_i\|\nabla (u(\tilde{v}_i) -\tilde{\lambda_i}^{-1}\tilde{v}_i)\|\big\}.
\endaligned$$
We now apply the estimate \eqref{source2} on the associated source problems 
\begin{equation}\label{coarse}
-\triangle u = \tilde{v}_i
\end{equation}
to estimate the error
$$
\|\nabla (u(\tilde{v}_i) -\tilde{\lambda_i}^{-1}\tilde{v}_i)\|, \;\;i=1,\cdots, r.
$$
Here we use $\tilde{\lambda}_i\tilde{v}_i$ as an approximate coarse solution of \eqref{coarse} and directly apply the source problem error estimate \cite[Corollary 4.2]{StrakosPapez}. 
The claim that we can substitute $C_{int}^2(\mathbf{e})$ for $C_{int}^2$ follows directly from \cite[Inequality (4.5)]{StrakosPapez}.
\end{proof}
Let us note that these results imply that our approach is only going to be robust if we deal with the whole cluster of lowermost eigenvalues of interest which are well separated from the rest of the spectrum (in the sense of Theorem \ref{sep}). For eigenvalues high up in the spectrum or for indefinite problems, other techniques and or analysis have to be utilized.

\section{Smoothed-adaptive PINVIT}\label{sec:S-AFEMES}

The algorithm that we propose is inspired by the ascending phase of the v-cycle multigrid methods. Those methods use prolongation to transfer the low frequency information contained in the coarse approximation to a finer –nested–grid, and then apply few steps of a smoothing iteration to improve the accuracy of the solution in the high frequency range. The iteration of this procedure is based on the principle that even a small number of smoothing iterations is sufficient to eliminate the high frequency error, while the prolongation from coarser grids guarantees the convergence in the low frequency regime, providing accurate algebraic solutions in linear computational time \cite{hackbusch2013multi}.

We emphasize that the algebraic solutions for intermediate AFEM loops~\eqref{afem_loop}  serve \textit{solely} to the construction of the final grid, and as a initial guess for the next space in the sequence. Their role is instrumental in triggering the ESTIMATE-MARK-REFINE steps, and in providing a reasonable initial guess for the finest level. 

Smoothed-adaptive PINVIT (SA-PINVIT) is formalized in the following scheme.

\tikzset{
  block/.style = {fill=white, minimum height=3em, minimum width=3em},
  }
 \begin{center}
  \begin{tikzpicture}[auto, node distance=2.5cm,>=latex']
  
    \node [block, name=firstsolve] (firstsolve) {PINVIT};
    \node [block, right of=firstsolve, node distance=2.5cm] (estimate) {ESTIMATE};
    \node [block, right of=estimate,node distance=2.5cm] (mark) {MARK};
    \node [block, right of=mark,node distance=2.3cm] (refine) {REFINE};
    \node [block, below of=refine, node distance=2.3cm] (prolongate){PROLONGATE};
    \node [block, below of=estimate, node distance=2.3cm] (smooth){S-PINVIT};
    \node [block, right of=refine, node distance=2.3cm] (lastsolve) {PINVIT};
    \draw [->] (firstsolve) -- (estimate);
    \draw [->] (estimate) -- (mark);
    \draw [->] (mark) -- (refine);
    \draw [->] (refine) -- (lastsolve);
    \draw [->] (refine) -- (prolongate);
    \draw [->] (prolongate) -- (smooth);
    \draw [->] (smooth)--(estimate);
  \end{tikzpicture}
 \end{center}

Smoothed-PINVIT (S-PINVIT) refers to classical PINVIT iteration, where we allow only a fixed number of iteration steps, and we use a smoother as preconditioner. In this way, we allow for very inaccurate approximate eigenpairs in those intermediate loops by applying a fixed number of few PINVIT iterations, or by setting as intermediate stop criterion a very large error tolerance. The prolongation of the residual from the previous level is used as an initial guess for the next level.

We start by recalling the PINVIT procedure in  Algorithm~\ref{pinvit_alg}, and after some considerations we describe SA-PINVIT in Algorithm~\ref{alg_safem_pinvit}. 

The Rayleigh quotient of any nonzero vector $\mathbf{v}_{\ell}\in \mathbb{R}^{N_{\ell}}$  is defined by  $\mu(\mathbf{v}_{\ell}):=\left \langle A_{\ell} \mathbf{v}_{\ell}, \mathbf{v}_{\ell} \right \rangle/ \left \langle M_{\ell}\mathbf{v}_{\ell}, \mathbf{v}_{\ell} \right \rangle$. PINVIT algorithm can be described as:

\begin{algorithm}[PINVIT($\mathbf{v}_{\ell}^0$, $P_\ell$, \text{max\_iter}, \text{tol})]\label{pinvit_alg} Given the initial guess $\mathbf{v}_{\ell}^0$ with associated Rayleigh quotient $\mu_{\ell}^{0}:=\mu(\mathbf{v}_{\ell}^0)$, generate a sequence of vectors $\{\mathbf{v}_{\ell}^{n} \}_{n \ge0}$ and associated Rayleigh quotients $\{\mu_{\ell}^{n} \}_{n \ge0}$ through steps $1.-3.$, iterated for a maximum of \text{max\_iter} times, or until $(\mu^{n+1}_{\ell}-\mu^{n}_{\ell})/\mu^{n}_{\ell} \leq \text{tol}$:

\begin{enumerate}[1.]
    \item  $\tilde{\mathbf{v}}_{\ell}^{n+1}= \mathbf{v}_{\ell}^n - P_{\ell}^{-1}(A_{\ell} \mathbf{v}_{\ell}^n - \mu_{\ell}(\mathbf{v}_{\ell}^n) M_{\ell} \mathbf{v}_{\ell}^n)$
    \item $\mathbf{v}_{\ell}^{n+1}= \|\tilde{\mathbf{v}}_{\ell}^{n+1}\|^{-1} \tilde{\mathbf{v}}_{\ell}^{n+1}$,
    \item $\mu^{n+1}_{\ell}= \mu_{\ell}(\mathbf{v}^{n+1}_{\ell}).$
\end{enumerate}
\end{algorithm}

In parallel to the inverse iteration for a single vector, we will also consider simultaneous or blocked inverse iteration called BPINVIT.

\begin{algorithm}[BPINVIT($\mathbf{V}_{\ell}^0$, $P_\ell$, \text{max\_iter}, \text{tol})]\label{b_pinvit_alg} Given the initial guess $\mathbf{V}_{\ell}^0\in\mathbb{R}^{N_{\ell}\times r}$ with associated generalized Rayleigh quotient $\Xi_{\ell}^{0}\in\mathbb{R}^{r\times r}$, generate a sequence of matrices $\{\mathbf{V}{_{\ell}}^{n} \}_{n \ge0}$ and associated generalized Rayleigh quotients $\{\Xi_{\ell}^{n} \}_{n \ge0}$ through steps $1.-3.$, iterated for a maximum of \text{max\_iter} times, or until $\|\Xi_{\ell}^{n+1}-\Xi_{\ell}^{n}\|/\|\Xi_{\ell}^{n}\| \leq \text{tol}$:

\begin{enumerate}[1.]
    \item  $\tilde{\mathbf{V}}_{\ell}^{n+1}= \mathbf{V}_{\ell}^n - P_{\ell}^{-1}(A_{\ell} \mathbf{V}_{\ell}^n -  M_{\ell} \mathbf{V}_{\ell}^n\Xi_{\ell}^{n})$
    \item Compute $W$ and $\Xi_{\ell}^{n+1}$ as an eigenvector and eigenvalue matrix of the
    generalized eigenproblem $( (\tilde{\mathbf{V}}_{\ell}^{n+1})^*A_{\ell}\tilde{\mathbf{V}}_{\ell}^{n+1},(\tilde{\mathbf{V}}_{\ell}^{n+1})^*M_{\ell}\tilde{\mathbf{V}}_{\ell}^{n+1})$.
    \item Set $\mathbf{V}_{\ell}^{n+1} = \tilde{\mathbf{V}}_{\ell}^{n+1} W $
\end{enumerate}
\end{algorithm}

Generally, in both Algorithms~\ref{pinvit_alg} and~\ref{b_pinvit_alg}, the initial guesses $\mathbf{V}_{\ell}^0$ and  $\mathbf{V}_{\ell}^0\in\mathbb{R}^{N_{\ell}\times r}$ are chosen randomly, while the preconditioner is a linear operator $P^{-1}_{\ell}$ which is positive definite and spectrally equivalent to $A_{\ell}$. This property can be formulated as: there exists a sufficiently small constant $\gamma_{P_{\ell}} <1$ such that
\begin{equation}\label{eq:spectralequivalence}
    \|Id_{\ell} - P_{\ell}^{-1}A_{\ell}\|_{A_{\ell}} \le \gamma_{P_{\ell}},
\end{equation}
where $\|\bullet\|_{A_{\ell}}$ denotes the operator norm induced
by $A_{\ell}$. An example is given by the wavelet preconditioners as in \cite{rohwedder2011perturbed}. However, this is a stronger property w.r.t. what we need to ensure convergence for our algorithm based on the result from \cite{Saad16}.

For simplicity, let us assume that the preconditioner is spectrally equivalent to $A_{\ell}$. Then the corresponding error propagation equation reads

\begin{equation}\label{eq:error_prop_pinvit}
    \mathbf{v_{\ell}}^{n+1}- \mu_{\ell}(\mathbf{v_{\ell}}^n)A_{{\ell}}^{-1}M_{\ell} \mathbf{v_{\ell}}^{n} = (I-P_{\ell}^{-1}A_{\ell}) (\mathbf{v_{\ell}}^{n} - \mu_{\ell}(\mathbf{v_{\ell}}^n)A_{\ell}^{-1}M_{\ell} \mathbf{v_{\ell}}^{n})
\end{equation}
and it illustrates the dependence between the initial error $\mathbf{v_{\ell}}^{n} - \mu_{\ell}(\mathbf{v_{\ell}}^n)A_{\ell}^{-1}M_{\ell} \mathbf{v_{\ell}}^{n},$  the new iterate  $\mathbf{v_{\ell}}^{n+1}- \mu_{\ell}(\mathbf{v_{\ell}}^n)A_{\ell}^{-1}M_{\ell} \mathbf{v_{\ell}}^{n}$, and the error propagation matrix (reducer) $(I-P_{\ell}^{-1}A_{\ell})$.

Next, we explicitly write the basic PINVIT iteration as 
\begin{equation}
    \begin{aligned}
    \tilde{\mathbf{v}_{\ell}}^{n+1}&= \mathbf{v}_{\ell}^n - P_{\ell}^{-1}(A_{\ell} \mathbf{v_{\ell}}^n - \mu_{\ell}(\mathbf{v}_{\ell}^n) M \mathbf{v}_{\ell}^n)\\
                            &= \mathbf{v}_{\ell}^n - P_{\ell}^{-1}A_{\ell} \mathbf{v}_{\ell}^n + \mu_{\ell}(\mathbf{v}_{\ell}^n)  P_{\ell}^{-1}M_{\ell} \mathbf{v}_{\ell}^n \\
                            &= (Id_{\ell} - P_{\ell}^{-1}A_{\ell}) \mathbf{v}_{\ell}^n + \mu_{\ell}(\mathbf{v}_{\ell}^n)  P_{\ell}^{-1}M_{\ell} \mathbf{v}_{\ell}^n \\
                            &= \mu_{\ell}(\mathbf{v}_{\ell}^n)  P_{\ell}^{-1}M_{\ell} \mathbf{v}_{\ell}^n + \xi_{\gamma_{P_{\ell}}},
    \end{aligned}
\end{equation}
where, using~\eqref{eq:spectralequivalence}, we split the identity into the contracting part and the rest labeled as $\xi_{\gamma_{P_{\ell}}}$. 

We can now analyze the convergence of this scheme using \cite[Proposition 3.7]{Saad16}.

\begin{lem}
Let PINVIT be implemented by Algorithm \ref{pinvit_alg},  let $\gamma_{P_{\ell}}$ be small enough compared to $\lambda_2(A_{\ell})/\lambda_1(A_{\ell})$, and let $ \mathbf{v}_{\ell}^0$ be not orthogonal to $\mathbf{u}_1 (A_{\ell})$. Then 
$$
\tan\angle(\mathbf{v}_{\ell}^n,\mathbf{u}_1 (A_{\ell}))\to 0.
$$
\end{lem}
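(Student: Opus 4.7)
The plan is to recognize the PINVIT recursion as a preconditioned inverse iteration subject to a small bounded perturbation and then invoke \cite[Proposition 3.7]{Saad16}. Starting from the reformulation already displayed immediately above the statement,
$$\tilde{\mathbf{v}}_\ell^{n+1} = \mu_\ell(\mathbf{v}_\ell^n)\, P_\ell^{-1} M_\ell \mathbf{v}_\ell^n + \xi_{\gamma_{P_\ell}},$$
the spectral equivalence condition \eqref{eq:spectralequivalence} gives the bound $\|\xi_{\gamma_{P_\ell}}\|_{A_\ell} \le \gamma_{P_\ell}\|\mathbf{v}_\ell^n\|_{A_\ell}$, so the leading term is (up to normalization) one step of the iteration governed by $P_\ell^{-1}M_\ell$, whose dominant eigenvector is close to $\mathbf{u}_1(A_\ell)$ when $\gamma_{P_\ell}$ is small.

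Next, I would decompose $\mathbf{v}_\ell^n = \alpha_n \mathbf{u}_1(A_\ell) + \mathbf{w}_n$ with $\mathbf{w}_n$ in the $M_\ell$-orthogonal complement of $\mathbf{u}_1(A_\ell)$, so that
$$\tan\angle(\mathbf{v}_\ell^n, \mathbf{u}_1(A_\ell)) = \frac{\|\mathbf{w}_n\|}{|\alpha_n|}.$$
Applying the unperturbed operator $P_\ell^{-1}M_\ell$ contracts the orthogonal component relative to the dominant one by a factor close to $\lambda_1(A_\ell)/\lambda_2(A_\ell) < 1$; the term $\xi_{\gamma_{P_\ell}}$ contributes an additive perturbation controlled by $\gamma_{P_\ell}$. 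As long as the sum of the nominal contraction factor and the amplification induced by $\xi_{\gamma_{P_\ell}}$ stays strictly below $1$, the tangent decreases geometrically from one step to the next. The normalization in step 2 of Algorithm~\ref{pinvit_alg} does not affect the angle, and the Rayleigh quotient update in step 3 drives $\mu_\ell(\mathbf{v}_\ell^n)$ monotonically toward $\lambda_1(A_\ell)$, which only reinforces the contraction rather than degrading it. The hypothesis that $\mathbf{v}_\ell^0$ is not orthogonal to $\mathbf{u}_1(A_\ell)$ ensures $\alpha_0 \neq 0$, so the tangent is well defined initially and remains finite throughout.

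The main obstacle will be making the phrase \emph{``$\gamma_{P_\ell}$ small enough compared to $\lambda_2(A_\ell)/\lambda_1(A_\ell)$''} quantitative: one needs a gap-type inequality of the form
$$\gamma_{P_\ell} < \frac{\lambda_2(A_\ell)-\lambda_1(A_\ell)}{\lambda_2(A_\ell)+\lambda_1(A_\ell)}$$
(or whatever sharper constant appears in Saad's statement) so that after absorbing the $\xi_{\gamma_{P_\ell}}$ perturbation the per-step factor for $\tan\angle(\mathbf{v}_\ell^n, \mathbf{u}_1(A_\ell))$ is a genuine contraction. Tracking the effect of using $\|\cdot\|_{A_\ell}$ for the perturbation bound while measuring the angle in the $M_\ell$ inner product, and dealing with the mild nonlinearity introduced by $\mu_\ell(\mathbf{v}_\ell^n)$, are the technical points that require care; once these are handled, the conclusion $\tan\angle(\mathbf{v}_\ell^n,\mathbf{u}_1(A_\ell))\to 0$ follows by a direct application of \cite[Proposition 3.7]{Saad16}.
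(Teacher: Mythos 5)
Your proposal is essentially the same as the paper's proof, which simply cites \cite[Proposition 3.7]{Saad16} (with \cite[Theorem 3]{rohwedder2011perturbed} as an alternative reference); you fill in the heuristic mechanism behind that application, but the core step is the identical citation. One minor conceptual point to tighten if you were to flesh this out: the cleanest route is not to argue that $P_\ell^{-1}M_\ell$ has a dominant eigenvector close to $\mathbf{u}_1(A_\ell)$, but rather to use the error propagation identity \eqref{eq:error_prop_pinvit}, which bounds the new iterate's deviation from one \emph{exact} inverse-iteration step ($\mu_\ell A_\ell^{-1}M_\ell$) by a $\gamma_{P_\ell}$ contraction, exactly the form Saad's proposition absorbs.
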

\begin{proof}
Apply \cite[Proposition 3.7]{Saad16}. Alternatively, see \cite[Theorem 3]{rohwedder2011perturbed}.
\end{proof}
An equivalent result is available for the blocked PINVIT.

\begin{theorem}
Let BPINVIT be implemented by Algorithm \ref{b_pinvit_alg}, let $\gamma_{P_{\ell}}$ 
be small enough compared to $\lambda_{r+1}/\lambda_r$, and let there be no vectors in $\text{Ran}(V_{\ell}^0)$ orthogonal to the subspace $\mathcal{E}_r$ spanned by the eigenvectors belonging to first $r$ eigenvalues. Then 
$$
\tan\angle(\text{Ran}(V_{\ell}^n),\mathcal{E}_r)\to 0.
$$
\end{theorem}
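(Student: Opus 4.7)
The plan is to lift the proof of the preceding single-vector lemma to the subspace setting, replacing the tangent of the angle between vectors with the tangent of the largest principal angle between the iterate's range and the invariant subspace $\mathcal{E}_r$, measured in the $M_\ell$-inner product. As in the scalar case, one can either appeal to the block analogue of \cite[Proposition 3.7]{Saad16} (simultaneous preconditioned inverse iteration is treated in the Knyazev--Neymeyr line of work already cited) or reconstruct the argument by hand. I would favour the latter because the block Rayleigh--Ritz step in Algorithm~\ref{b_pinvit_alg} is cleaner to exploit directly.

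First, I would rewrite the preconditioner step as
\begin{equation*}
\tilde{\mathbf{V}}_\ell^{n+1} = (Id_\ell - P_\ell^{-1}A_\ell)\mathbf{V}_\ell^n + P_\ell^{-1}M_\ell \mathbf{V}_\ell^n \Xi_\ell^n,
\end{equation*}
in direct analogy with the decomposition carried out for the single-vector algorithm just above the lemma. Decomposing each column of $\mathbf{V}_\ell^n$ into its $M_\ell$-orthogonal projection onto $\mathcal{E}_r$ plus a component in $\mathcal{E}_r^\perp$, the spectral equivalence $\|Id_\ell - P_\ell^{-1}A_\ell\|_{A_\ell}\le\gamma_{P_\ell}$ together with the spectral gap $\lambda_r<\lambda_{r+1}$ shows that the right-hand side behaves like an ideal block inverse iteration (which scales the $\mathcal{E}_r^\perp$ component by a factor of order $\lambda_r/\lambda_{r+1}$) plus a perturbation of size $O(\gamma_{P_\ell})$. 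The hypothesis that $\gamma_{P_\ell}$ is small compared to $\lambda_{r+1}/\lambda_r$ then gives strict contraction of the unwanted component on each column.

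Next I would exploit the Rayleigh--Ritz step: by the Courant--Fischer minimax characterisation, among all $r$-dimensional subspaces of $\text{Ran}(\tilde{\mathbf{V}}_\ell^{n+1})$ the Ritz subspace is the closest to $\mathcal{E}_r$ in the principal angles, so the orthonormalisation in step~3 can only help. Combining this with the column-wise contraction yields a recursion
\begin{equation*}
\tan\angle(\text{Ran}(\mathbf{V}_\ell^{n+1}),\mathcal{E}_r) \le \rho\,\tan\angle(\text{Ran}(\mathbf{V}_\ell^{n}),\mathcal{E}_r),
\end{equation*}
with a contraction factor $\rho<1$ depending only on $\gamma_{P_\ell}$ and $\lambda_r/\lambda_{r+1}$. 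The hypothesis that no vector in $\text{Ran}(\mathbf{V}_\ell^0)$ is orthogonal to $\mathcal{E}_r$ guarantees that the initial largest principal angle is strictly less than $\pi/2$, so the recursion drives it to zero.

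The main obstacle is the nonlinearity introduced by the shift matrix $\Xi_\ell^n$, whose entries depend on the current iterate through the Rayleigh quotient, so the map is not a genuine linear contraction. This is handled by Temple--Kato style bounds that lock the Ritz values $\Xi_\ell^n$ into a neighbourhood of the true cluster $\lambda_1,\dots,\lambda_r$ as soon as the initial principal angle is moderate; once this is established, the contraction factor $\rho$ can be taken uniform in $n$ and the conclusion follows. This non-asymptotic phase is precisely where the assumption that $\gamma_{P_\ell}$ be \emph{small enough} compared to $\lambda_{r+1}/\lambda_r$ is used, in exactly the same spirit as for the scalar PINVIT lemma.
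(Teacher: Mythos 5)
The paper does not actually prove this theorem: it is stated without a proof environment, immediately after the single-vector Lemma whose entire proof is a citation to \cite[Proposition 3.7]{Saad16} (or, alternatively, \cite[Theorem 3]{rohwedder2011perturbed}), and the block theorem is introduced only with the sentence ``an equivalent result is available for the blocked PINVIT.'' Your decision to reconstruct the argument by hand is thus a genuine departure from the paper's implicit route of citing the Saad / Knyazev--Neymeyr literature. One small misstep: the appeal to Courant--Fischer for step~3 is vacuous, since in Algorithm~\ref{b_pinvit_alg} the block $\tilde{\mathbf{V}}_\ell^{n+1}$ has exactly $r$ columns, so $W\in\mathbb{R}^{r\times r}$ is invertible and $\text{Ran}(\mathbf{V}_\ell^{n+1})=\text{Ran}(\tilde{\mathbf{V}}_\ell^{n+1})$; there is no $r$-dimensional subspace to select, the Ritz step is an identity on subspaces, and ``the orthonormalisation can only help'' reads as a confusion with an oversampled block variant. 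The conclusion you need (the principal angle is unchanged by step~3) is of course true, but for a trivial reason.

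The genuine gap is the central contraction estimate. You invoke a ``column-wise contraction'' and then assert the recursion $\tan\angle(\text{Ran}(\mathbf{V}_\ell^{n+1}),\mathcal{E}_r)\le\rho\,\tan\angle(\text{Ran}(\mathbf{V}_\ell^{n}),\mathcal{E}_r)$, but the columns of $\mathbf{V}_\ell^n$ are not decoupled: even after the previous Ritz diagonalisation, each column is shifted by a different Ritz value in $\Xi_\ell^n$, and the preconditioning perturbation in one column can rotate into directions carried by the others. A per-column bound therefore does not control the largest principal angle between \emph{subspaces}, and turning it into a subspace-to-subspace contraction is precisely what the Knyazev--Neymeyr geometric theory (and \cite[Theorem 3]{rohwedder2011perturbed} in the operator setting) accomplishes with considerable effort. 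You correctly isolate the two hard ingredients --- the nonlinearity of $\Xi_\ell^n$ and a Temple--Kato style locking of the Ritz values --- but as written the proof plan reduces the theorem to an unproved subspace-angle lemma that is essentially the theorem itself. Either supply that lemma or, mirroring what the paper does in the scalar case, cite the block PINVIT convergence results directly.
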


However, a weaker condition than~\eqref{eq:spectralequivalence} still allows us to use the 
convergence analysis from \cite[Proposition 3.7]{Saad16}. Namely, it is sufficient that the preconditioner does a good job only on the iterates it is applied on. Since this is a computation of the dominant (low frequency) eigenspace, a smoother will satisfy such a requirement. 

Quantitatively, this reads
\begin{equation}\label{eq:spectralequivalence_weaker}
\aligned
    \|P_{\ell}^{-1}\mathbf{v} - A_{\ell}^{-1}\mathbf{v}\| &\le \|A_{\ell}^{-1}\|\|A_{\ell} (P_{\ell}^{-1} \mathbf{v} - \mathbf{v})\|\\
    &\le \gamma_{P_{\ell},k} \|\mathbf{v}\|_{A_{\ell}}, \qquad \mathbf{v}\in\text{Ran}(V_{\ell}^k)
\endaligned
\end{equation}
and since we compute the action of $P_{\ell}^{-1}$ by applying several smoothing steps we can control the 
size of $\gamma_{P_{\ell},k}$ by increasing the number of steps applied while monitoring the size
of the discrete residual $\|A_{\ell} (P_{\ell}^{-1} \mathbf{v} - \mathbf{v})\|$.
Condition~\eqref{eq:spectralequivalence_weaker} allows for a smoother, which is going to satisfy this requirement on low frequencies (e.g. those close to $\|A_{\ell}^{-1}\|^{-1}$). Here we have tacitly assumed that the initial subspace had a small  angle with the space spanned by low frequency eigenmodes. Let us point out that even though these results present estimates, their conditions are vary hard to check in practice. The fact that an eigenvalue estimate depends on external information on the unwanted part of the spectrum (e.g. location of $\lambda_2$) is frequently found in eigenvalue bracketing results such as e.g. \cite{Vejchodsk1,Vejchodsk2}. In this context the estimates have to be seen as asymptotic and their main value is indicating the source of instability in the realization of the method.

We now combine these considerations in the next algorithm. The parameters of the algorithm are: the maximum number of adaptive loops $\bar{\ell}$, the preconditioner $P^{\text{ext}}$ for the first and last loop, the preconditioner $P^{\text{int}}$ for the intermediate loops, the tolerance for the error estimator \text{tol}$^{\eta}$, the tolerance and maximum number of iterations for the intermediate loops  \text{tol}$^{\text{int}}$ and  \text{max\_iter}$^{\text{int}}$, and \text{tol}$^{\text{ext}}$ and  \text{max\_iter}$^{\text{ext}}$ for the first and last loop respectively. 

As an error estimator, we use any error estimator which works directly with an approximate eigenvector, without assuming that the approximate eigenvector satisfies any variational optimality condition (e.g. it needs not be a Ritz vector -- or even close to it -- from a given finite element space, see \cite{carstensen2014adaptive,GrubisicOvall,giani2009convergent}) and Section \ref{res_sec}.

\begin{algorithm}[SA-PINVIT(\text{tol}$^{\eta}$, $\bar\ell$, $P^{\text{ext}}$,  $P^{\text{int}}$, \text{tol}$^{\text{ext}}$,  \text{tol}$^{\text{int}}$,  \text{max\_iter}$^{\text{ext}}$, \text{max\_iter}$^{\text{int}}$)]\label{alg_safem_pinvit} Starting from an initial coarse mesh $\mathcal{T}_1$, and an initial random vector $\mathbf{v}_1$, apply PINVIT($\mathbf{v}_1$, $P_1^{\text{ext}}$,  \text{max\_iter}$^{\text{ext}}$,  \text{tol}$^{\text{ext}}$).

Then, for $\ell=1, \dots, \bar{\ell}-1$ do steps $1.-5.$

\begin{enumerate}[1.]
\item S-PINVIT: Apply PINVIT(${v}^0_{\ell}$, $P_\ell^{\text{int}}$,  \text{max\_iter}$^{\text{int}}$,  \text{tol}$^{\text{int}}$), with initial guess $\mathbf{v}^{0}_{\ell}:= I^{\ell}_{\ell-1}\mathbf{v}_{\ell-1}$, and define ${v}_{\ell}$, $\mu_{\ell}$ as the resulting approximated eigenpair.

\item Estimate: Compute $\eta_T(\mu_{\ell},\mathbf{v}_{\ell})$ for any $T$.
\item Check convergence: If $\eta(\mu_{\ell},\mathbf{v}_{\ell})<tol^{\eta}$ go to Step 6.
\item Mark: Choose set of cells to refine $\mathcal{M}_{\ell}\subset \mathcal{T}_{\ell}$ based on $\eta_T(\mu_{\ell},\mathbf{v}_{\ell})$.
\item Refine: Generate new mesh $\mathcal{T}_{{\ell}+1}$.
\end{enumerate}

\vspace{.5cm}

\noindent 6. \emph{PINVIT}:  Apply PINVIT(${v}^0_{\bar\ell}$, $P_{\bar\ell}^{\text{ext}}$,  \text{max\_iter}$^{\text{ext}}$,  \text{tol}$^{\text{ext}}$), with initial guess $\mathbf{v}^{0}_{\bar\ell} = I^{\bar\ell}_{\bar\ell-1}\mathbf{v}_{\bar\ell-1}$.\\
\emph{Output}: nested sequence of meshes $\mathcal{T}_{{\ell}}$, smoothed approximations $(\mu_{\ell},\mathbf{v}_{{\ell}} )$, estimators $\eta(\mu_{\ell}, \mathbf{v}_{\ell})$ for ${\ell}=1, \dots, \bar{{\ell}}-1$, final problem-adapted approximations $(\mu_{\bar{{\ell}}},\mathbf{v}_{\bar{{\ell}}})$ such that $|\mu_{\bar\ell}-\mu_{\bar\ell-1}|/\mu_{\bar\ell-1} \le \text{tol}^{\text{ext}}$ (if the maximum number of \emph{external} iterations was \emph{not} reached).\\
\end{algorithm}

According to \cite{Saad16}, Algorithm \ref{alg_safem_pinvit} will converge if $\gamma_{P_{\ell},k}\to0$
as $\ell\to\infty$. We can monitor this by monitoring the behavior of the relative residuals $\|A_{\ell} (P_{\ell}^{-1}v - v)\|/\|v\|$. Intuitively, the sequence of vectors (subspaces in the blocked version) $
\mathbf{v}_{\bar{{\ell}}}$ can be seen as a perturbation (with a diminishing perturbation size)
of the Ritz vectors $\mathbf{u}_{\ell}$. Since the perturbation size is decreasing and $\mathbf{v}_{\ell}$
converge to $\mathbf{u}_{\ell}$ we have convergence of perturbed inverse iteration as in \cite{Saad16,Oliveira}.

\section{Numerical validation}\label{sec:numerical_experiments}

In order to validate our findings, we implemented both an A-PINVIT and a SA-PINVIT solver in \texttt{C++} using the \texttt{deal.II} library~\cite{ArndtBangerthDavydov-2021-a, ArndtBangerthBlais-2020-a, SartoriGiulianiBardelloni-2018-a}.

The code is available as opensource on a public repository at \url{https://github.com/luca-heltai/sa-pinvit}. It is based on a modification of the tutorial program \texttt{step-50} of the \texttt{deal.II} library~\cite{Clevenger2020}, and it implements the Laplace and Poisson operators using state-of-the-art matrix-free geometric multigrid preconditioners with local smoothing (see, for example, \cite{janssen2011adaptive, Clevenger2021}). The code is fully parallel, and it is HPC ready. All experiments were run on parallel using message passing interface (MPI) parallelization, on a single node with two CPUS, each featuring a 24-cores Intel Xeon 8160 (SkyLake) running at 2.10 GHz, for a total of 48 cores. All the timings in this section refer to this machine.

\subsection{Fichera corner in two dimensions}\label{subsec:fichera_2d}

We now explore in detail the actual performance of SA-PINVIT in the fully adaptive case, and compare it with pure A-PINVIT.
We start with a classical two dimensional Fichera corner (also known as L-shape domain), and refer to \cite{trefethen2006computed} for reference eigenvalue computations. 

This first test is meant to provide an overview of the behaviour of A-PINVIT and SA-PINVIT in terms of preconditioners, smoothers, and maximum number of inner iterations allowed.

The most sofisticated preconditioner we use is a geometric multigrid (GMG) v-cycle, with a single Jacobi iteration as pre and post smoother. The second type of preconditioner we test is a Chebyshev smoother of variable order (see, for example,~\cite{Varga2009}). In the figures, these are indicated using the notation GMG$^k$(Jacobi$^1$) to identify the application of $k$ geometric multigrid v-cyles, with one application of the Jacobi preconditioner as inner smoother.

Similarly, we indicate with Chebyshev$^k$($d$) the application of $k$ steps of polynomial Chebyshev smoothing of order $d$. 
When the Chebyshev degree is equal to one, this smoother coincides with the one used internally by the GMG preconditioner, i.e., it is a Jacobi smoother. 

Figures~\ref{fig:fichera_2d_deg1_e_00} and ~\ref{fig:fichera_2d_deg1_estimator} show the error in the first eigenvalue computation for bi-linear finite elements, and the difference in the estimator evaluated on the (inexact) intermediate solvers, and on the PINVIT iteration (which is iterated until the iterative tolerance is lower than $10^{-12}$) with different smoothers.

\begin{figure}
  \includegraphics[width=\textwidth]{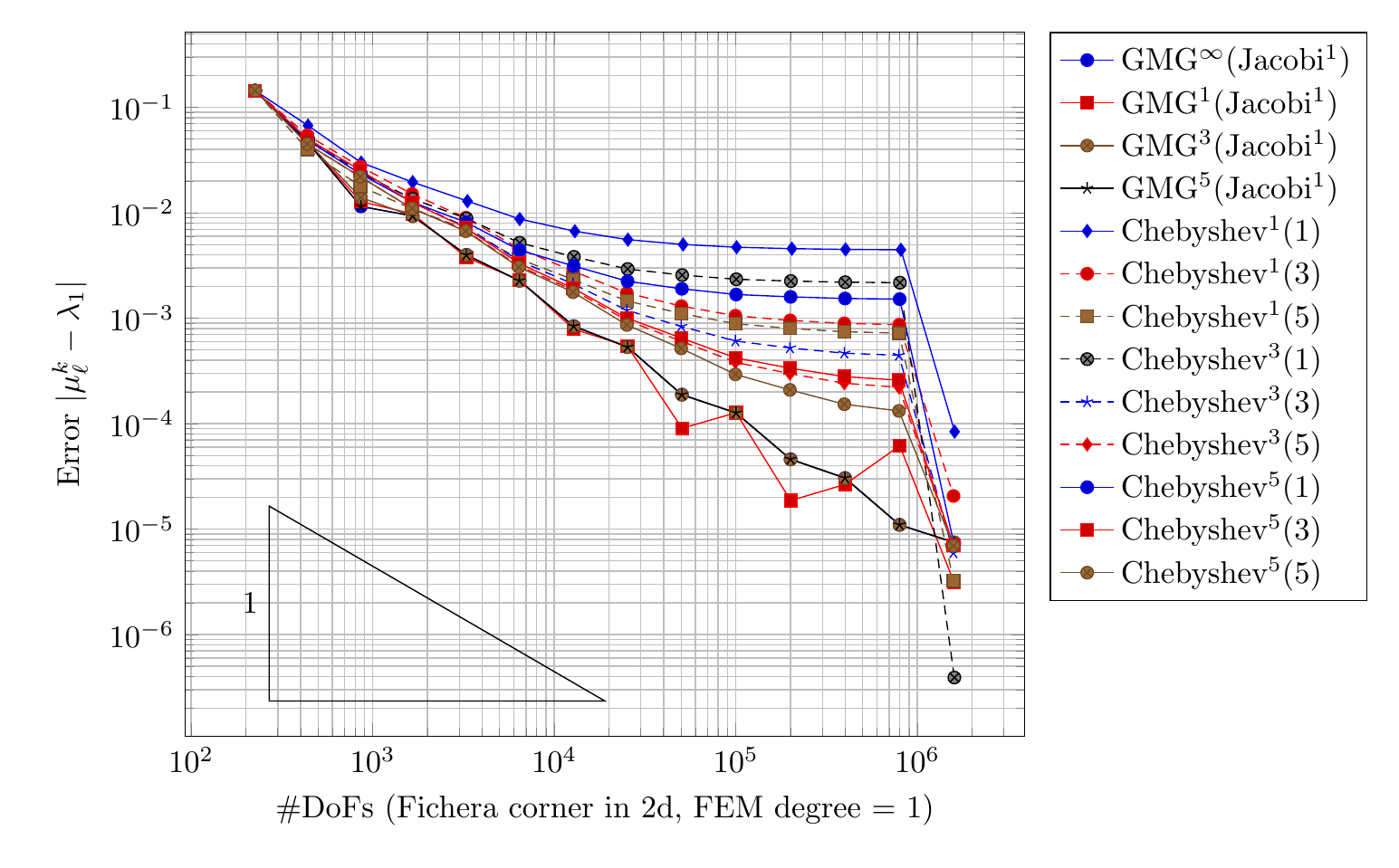}
  \caption{Error in the computation of the lowermost eigenvalue for SA-PINVIT vs A-PINVIT, Fichera corner problem in 2D.
  The comparison shows pure A-PINVIT (using as preconditioner a geometric multigrid v-cycle iteration with one cycle of Jacobi iteration as internal smoother, indicated with GMG$^\infty$(Jacobi$^1$)), where the $\infty$ is there to indicate that we iterate until convergence to a tolerance of $10^{-12}$), and SA-PINVIT based on the application of a fixed number of the same v-cycle algorithm, or a fixed number of smoothing steps using Chebyshev polynomials of order 1, 3, or 5 as preconditioner, indicated with Chebyshev$^k$($d$) where $k$ is the number of iteration steps, and $d$ is the polynomial degree of the Chebyshev smoother.}
  \label{fig:fichera_2d_deg1_e_00}
\end{figure}

\begin{figure}
  \includegraphics[width=\textwidth]{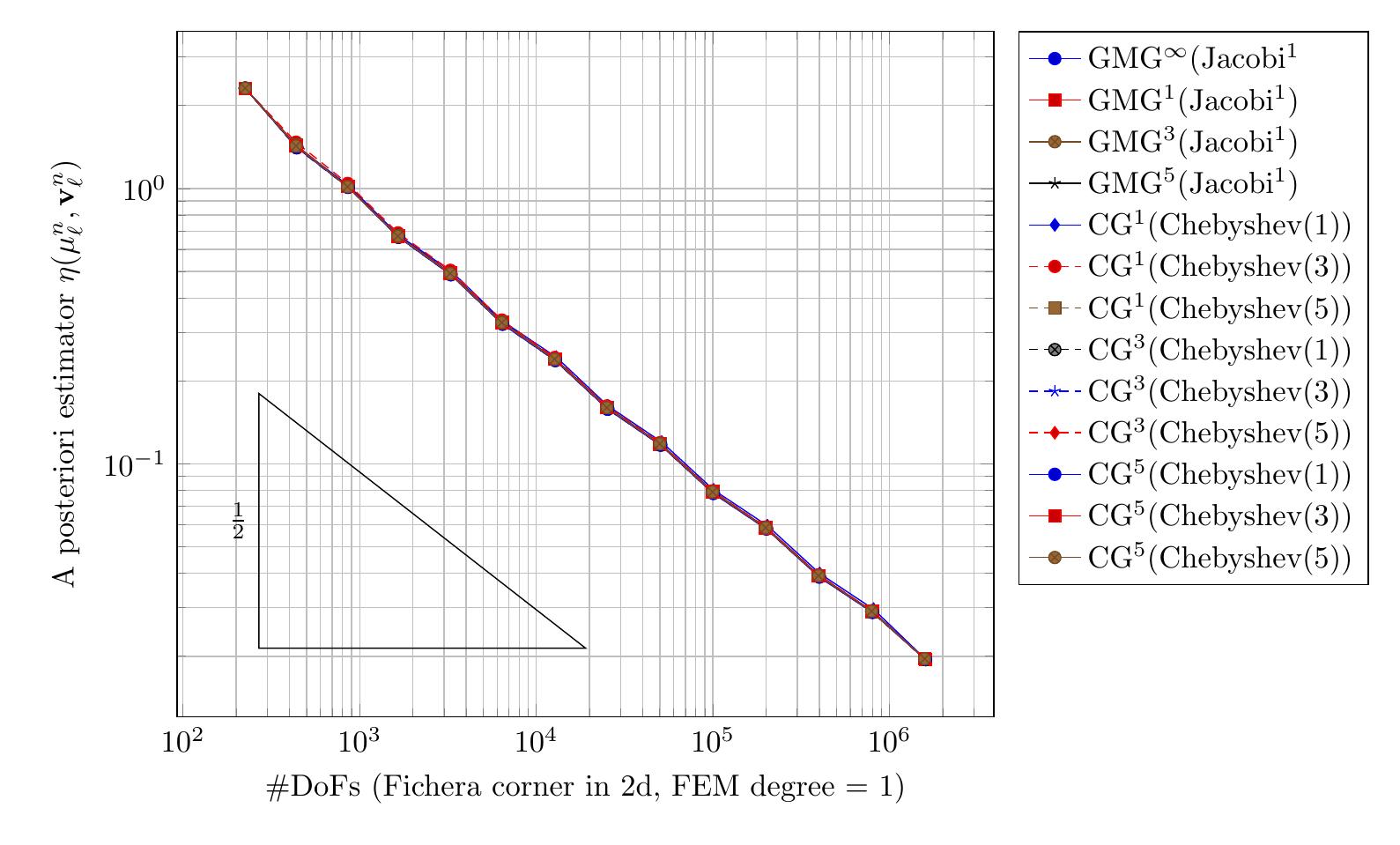}
  \caption{Error estimator for SA-PINVIT vs A-PINVIT, Fichera corner problem in 2D.
  The comparison shows pure A-PINVIT (using as preconditioner a geometric multigrid v-cycle iteration with one cycle of Jacobi iteration as internal smoother, indicated with GMG$^\infty$(Jacobi(1)), where the $\infty$ is there to indicate that we iterate until convergence to a tolerance of $10^{-12}$), and SA-PINVIT based on the application of a fixed number of the same v-cycle algorithm, or a fixed number of conjugate gradient iterations with Chebyshev smoother of degree 1,3, or 5 as preconditioner, indicated with CG$^k$(Chebyshev($d$))) where $k$ is the number of iteration steps, and $d$ is the polynomial degree of the Chebyshev smoother.}
  \label{fig:fichera_2d_deg1_estimator}
\end{figure}

From the figures, it is clear that choosing a better smoother (i.e., using a higher order in the Chebyshev polynomial expansion,  increasing the number of smoothing steps, or using a v-cycle multigrid iteration) decreases the error in the computation of the lowermost eigenvalue in the intermediate stages, but -- in most of the cases -- it has little or no effect on the error that we obtain on the final refinement grid (where full PINVIT is used, with the same configuration as above). 

Just as it happens for the source problem~\cite{mulita2019quasi}, the estimator (and therefore the final mesh pattern) is almost insensitive to the quality of the smoother, and all smoothers used in this set of experiments provide almost the same global estimator (see Figure~\ref{fig:fichera_2d_deg1_estimator}). 

When analyzing the convergence properties of the inexact (smoothed) algorithm, one should concentrate on the concept of the total error in the sense of \cite{StrakosPapez}. The total error of an eigenvalue/vector approximation consists of two parts, the discrete residual measure and the algebraic error,  see Theorem \ref{sep}. We use the discrete residual measure to drive the adaptivity and we control the total energy of the current iterate(s) by smoothing to limit the influence of the algebraic error. When analyzing the discrete residual of a given eigenvector approximation, one sees that it essentially depends on the topology of the mesh (as well as on the properly scaled eigenvector). As such, the convergence of the estimator indicates that the best possible approximation which can be obtained from the current mesh topology is improving by refinement. In view of Theorem \ref{sep}, as well as common sense, it does not imply that the total error converges. Subsequently, a marking strategy based on such an indicator produces a similar refinement as it would have if were we to compute the discrete residual measure from the optimal Galerkin approximation instead of the smoothed prolongation from the previous mesh. The smoothing process primarily ensures that the coefficient in front of the algebraic error does not explode. 

Using a lower quality smoother, however, has a strong impact on the computational cost of the last step, which will take longer to converge if the initial guess is too far away from the exact solution (see Figure~\ref{fig:fichera_2d_deg1_cost}). Choosing a better smoother in the intermediate steps does not have a large benefit on the final solver accuracy, but it does help in decreasing its computational cost.

The analysis of Figures~\ref{fig:fichera_2d_deg1_e_00},~\ref{fig:fichera_2d_deg1_estimator} and~\ref{fig:fichera_2d_deg1_cost} shows that it is necessary to find a balance between accuracy in the intermediate levels, and the overall computational cost. In the tests we performed, the best balance seems to be obtained by applying as a smoother a small fixed number of GMG v-cycle iterations (either two or three). Figure~\ref{fig:mesh_comparison_fichera_2d} shows that the actual mesh refinement patterns generated by A-PINVT and SA-PINVIT with one and three steps of GMG preconditioner respectively are almost identical, at a significant fraction of the total computational cost.

\begin{figure}
  \includegraphics[width=\textwidth]{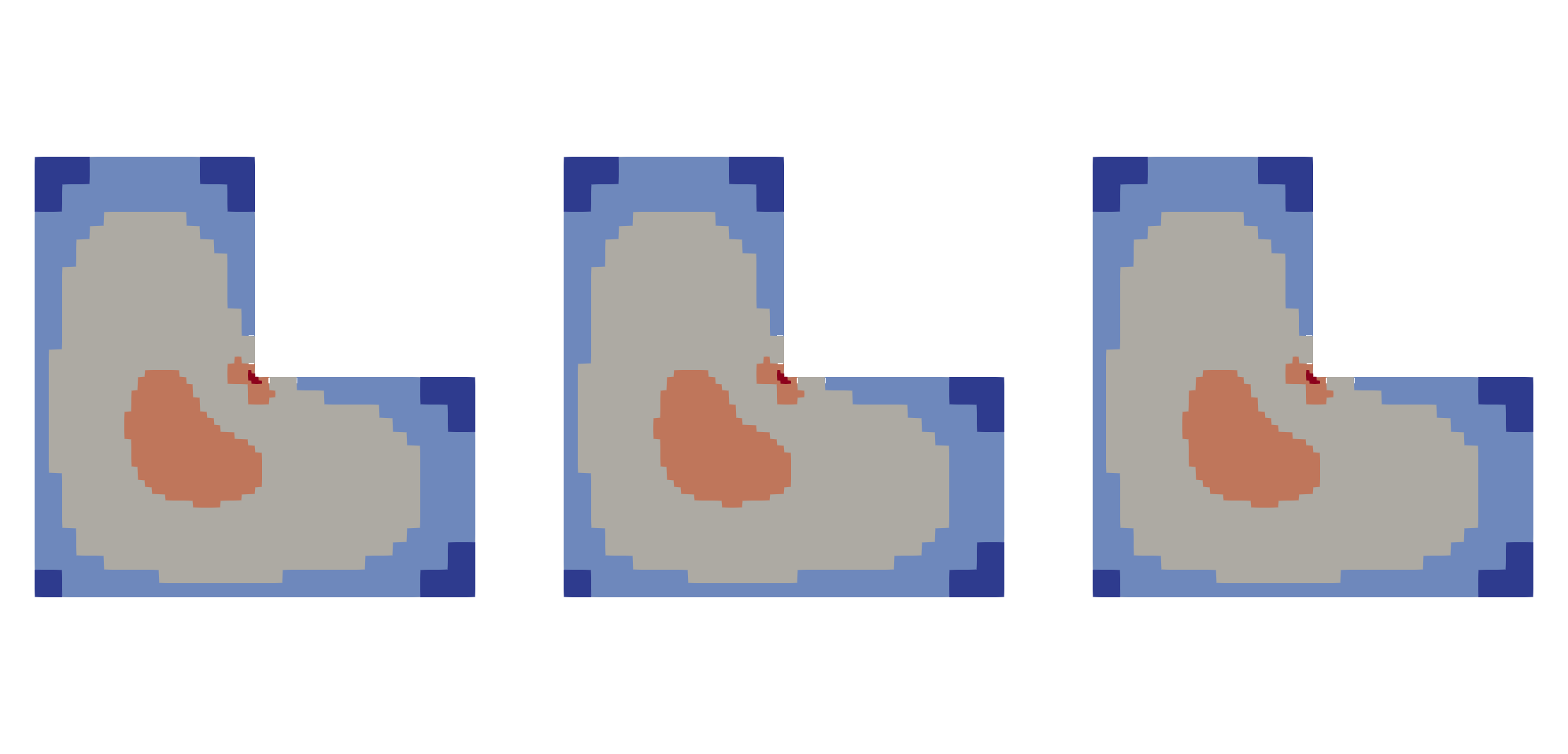}
  \caption{Mesh pattern comparison on the Fichera corner problem in 2D, with ten adaptive refinements. Left: full A-PINVIT (98.457 cells on final level), center: SA-PINVIT with GMG$^1$(Jacobi$^1$) (98.601 cells on final level), right: SA-PINVIT with GMG$^3$(Jacobi$^1$) (98505 cells on final level). The plot shows higher levels of refinement in red, in the intermediate step number four.}
  \label{fig:mesh_comparison_fichera_2d}
\end{figure}

\begin{figure}
  \includegraphics[width=\textwidth]{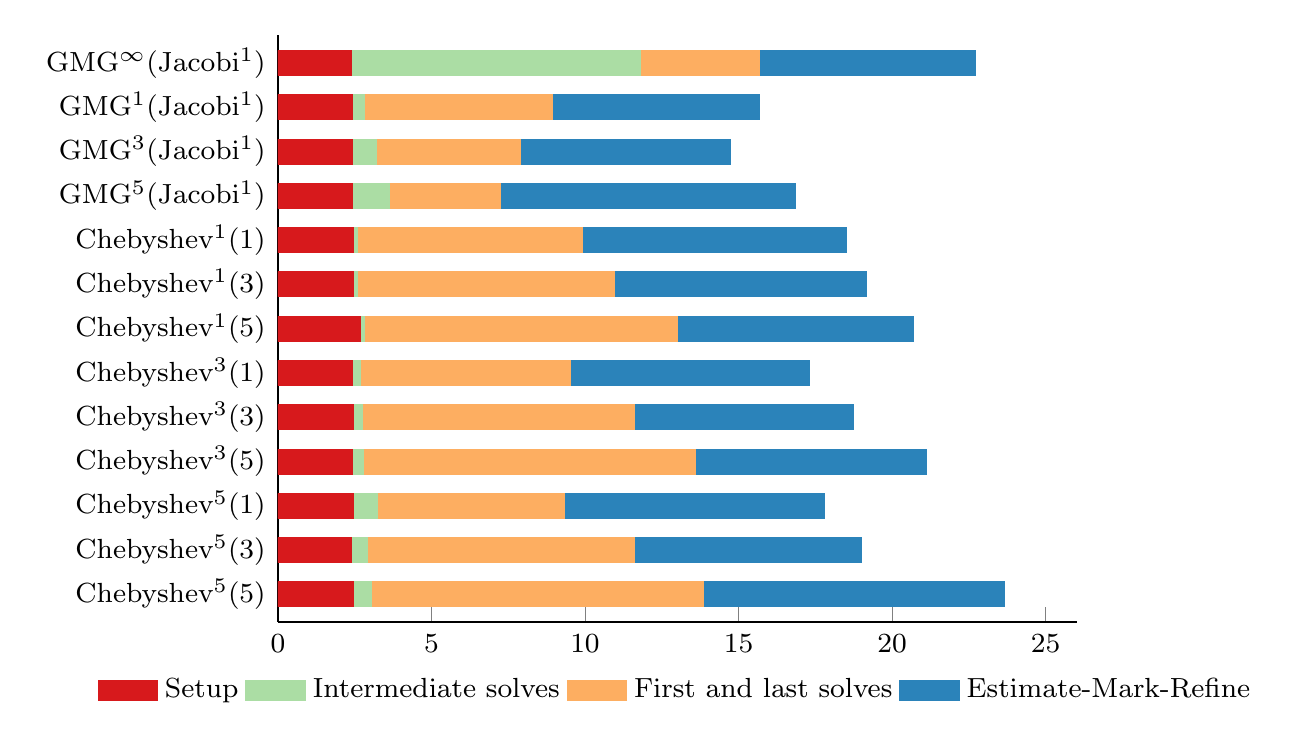}
  \caption{Computational cost of A-PINVIT and SA-PINVIT for the Fichera corner problem in 2D. The timing is in seconds, and the dimension of the problem goes to $O(10^{6})$ degrees of freedom.}
  \label{fig:fichera_2d_deg1_cost}
\end{figure}

Finally, in Figure~\ref{fig:fichera_2d_deg1_all_eigenvalues} we provide a convergence plot for the first six eigenvalues of the L-shaped domain when using full A-PINVIT or SA-PINVIT based on the application of two GMG v-cycle steps as smoother. The plot shows optimal (linear) convergence of the eigenvalues w.r.t. the global number of degrees of freedom, in both A-PINVIT and SA-PINVIT, also in the intermediate steps, showing that this choice of smoother is very effective in providing a good approximation of the lowest part of the spectrum with very few iterations.

\begin{figure}
  \includegraphics[width=\textwidth]{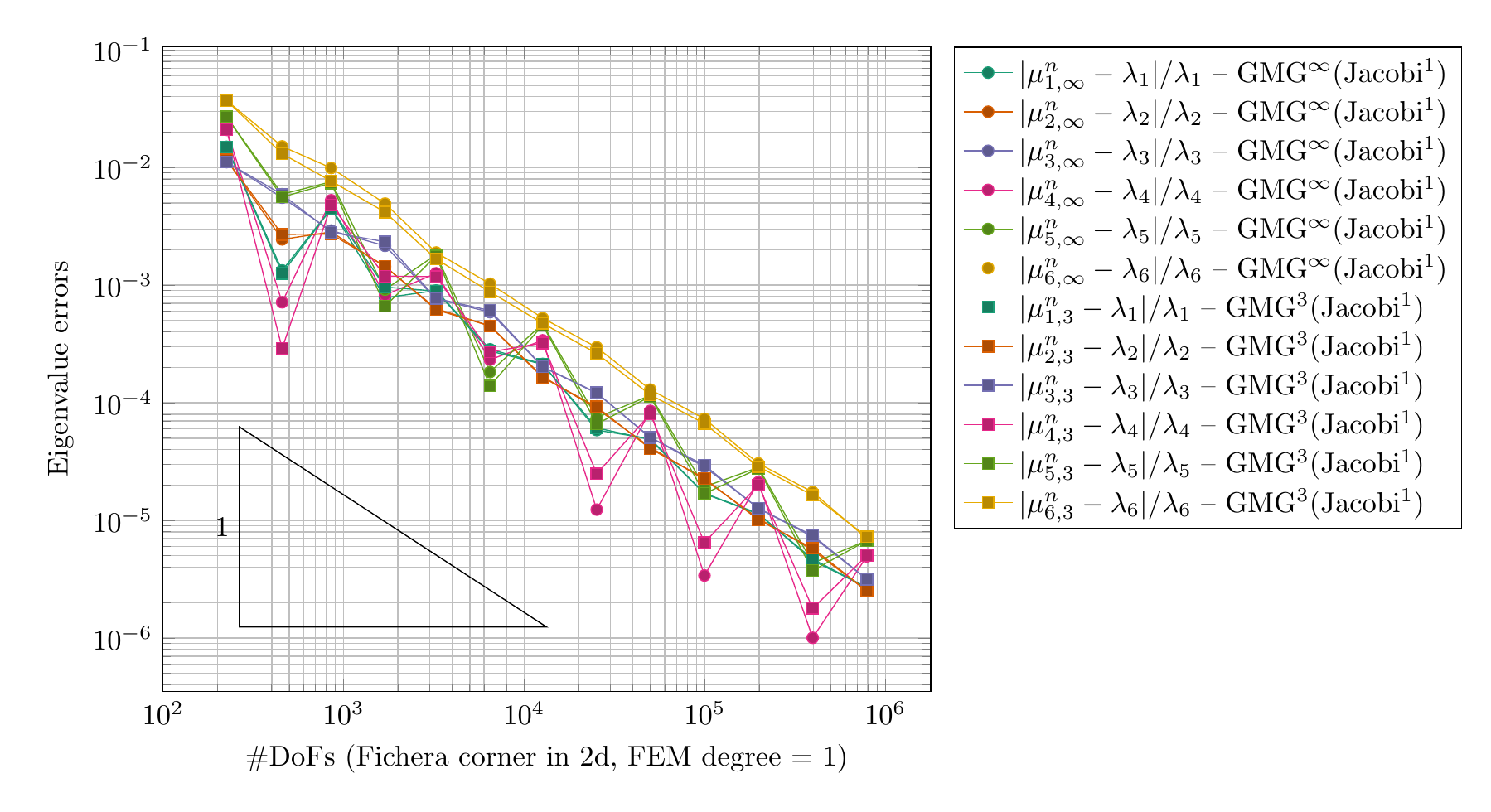}
  \caption{Error in the first six lowermost eigevnalues for SA-PINVIT vs A-PINVIT, Fichera corner problem in 2D. The estimator is computed w.r.t. the first eigenvalue and eigenvector pair.
  The comparison shows pure A-PINVIT (using as preconditioner a geometric multigrid v-cycle iteration with two cycles of Jacobi iteration as internal smoother, indicated with GMG$^\infty$(Jacobi(1)), where the $\infty$ is there to indicate that we iterate until convergence to a tolerance of $10^{-12}$), and SA-PINVIT based on the application of three iterations of the same v-cycle algorithm.}
  \label{fig:fichera_2d_deg1_all_eigenvalues}
\end{figure}

\subsection{Dumbbell in two dimensions} \label{subsec:dumbbell_2d}

A more difficult test case is given by a dumbbell domain in two dimensions. We take two copies of the same rectangle connected by the small bridge (see~\cite{trefethen2006computed} for the detailed description of the domain). If we were just to have two rectangles, we would have the multiplicities of the eigenvalues of the Laplace operator on the square doubled. However, due to the small connection the eigenvalues degenerate in the cluster of eigenvalues of the same joint multiplicity. We consider $6$ lowermost eigenvalues since this cluster is relatively tight and well separated from the rest of the spectrum. As reference eigenvalues we use highly accurate eigenvalues computed by the $hp$ adapted DG method \cite{GIANI2015604}. For an alternative method to compute highly accurate eigenvalues of polygonal regions see \cite{trefethen2006computed}.

In Figure~\ref{fig:dumbbell_2d_deg1_all_eigenvalues} we plot the error in the six lowermost eigenvalues (shown in Figure~\ref{fig:eigenfunctions_dumbbell_mesh}), when the error estimator is computed only with the first eigenpair. We show the difference between A-PINVIT and SA-PINVIT, where the same colors indicate the same eigenvalue, while circle markers identify A-PINVIT and squares ones identify SA-PINVIT.

\begin{figure}
  \includegraphics[width=\textwidth]{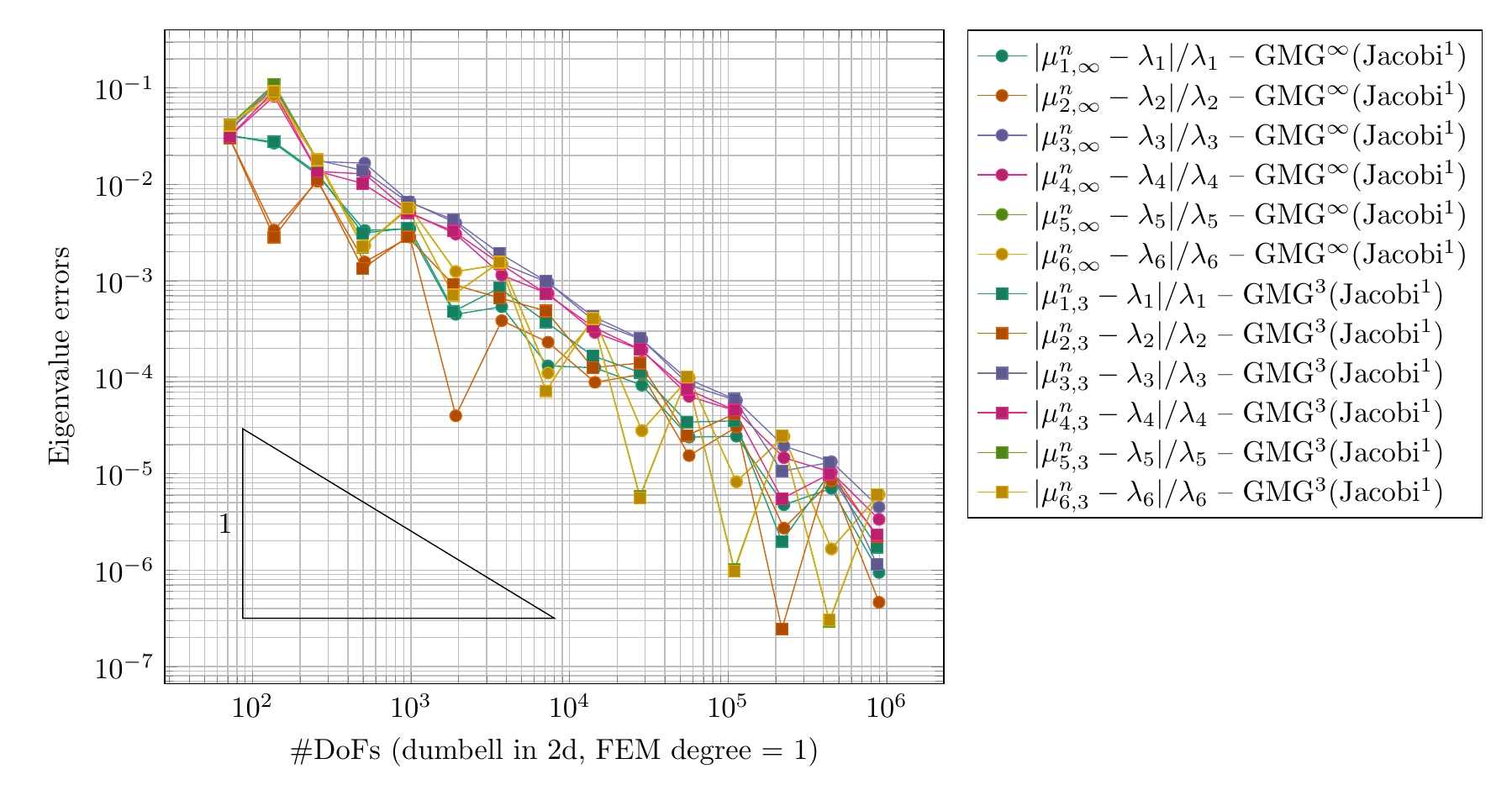}
  \caption{Error in the first six lowermost eigevnalues for SA-PINVIT vs A-PINVIT, dumbbell problem in 2D. The estimator is computed w.r.t. the first eigenvalue and eigenvector pair.
  The comparison shows pure A-PINVIT (using as preconditioner a geometric multigrid v-cycle iteration with one cycle of Jacobi iteration as internal smoother, indicated with GMG$^\infty$(Jacobi(1)), where the $\infty$ is there to indicate that we iterate until convergence to a tolerance of $10^{-12}$), and SA-PINVIT based on the application of three iterations of the same v-cycle algorithm.}
  \label{fig:dumbbell_2d_deg1_all_eigenvalues}
\end{figure}

In this case, the difference in the mesh sequences between A-PINVIT and SA-PINVIT using one GMG v-cycle as smoothing step is visible (see top and center plots in Figure~\ref{fig:mesh_comparison_dumbbell_2d}), even though the convergence of the eigenvalues is essentially the same. 

Notice how introducing local refinement breaks the symmetry of the eigenvalue problem. A mesh generated by a global marking strategy does not need to be symmetric (unless the marking strategy is designe to enforce the symetry structurally) and so there is no reason for a given eigenvector approximation to be mesh symmetric. This effect is particularly explicit in the case of a double eigenvalue and an iterative method which uses random starts or restarts, such as Arnoldi or Krylov-Schur. However, the subspace spanned by a collection of eigenvectors associated to a multiple eigenvalue or a tight cluster of eigenvalues will be stable with respects to the perturbations introduced by refinement. Furthermore the subspaces spannd by the approximate eigenvectors and the true egenvectors will be  close in terms of the subspace angles. Note that small subspace angle does not imply any mesh symmetry in the preasymptotic regime. In Figure \ref{fig:eigenfunctions_dumbbell_mesh} we present highly accurate eigenvector approximations computed by our algorithm from the finite element space of $10^{6}$ degrees of freedom. On can observe the nodal lines marking the expected symmetries of the dumbbell. This indicates that the symmetries will be achieved asymptotically. 

The difference in the grids is still there, but not noticeable, between PINVIT and SA-PINVIT when using three steps of GMG v-cycles. The computational cost for this test case is presented in Figure~\ref{fig:dumbbell_2d_deg1_cost}, where two GMG v-cycles are used for SA-PINVIT, showing a speedup of about three in the total computational cost, where the largest saving is clearly in the intermediate solution steps.

\begin{figure}
  \centering
  \includegraphics[width=.6\textwidth]{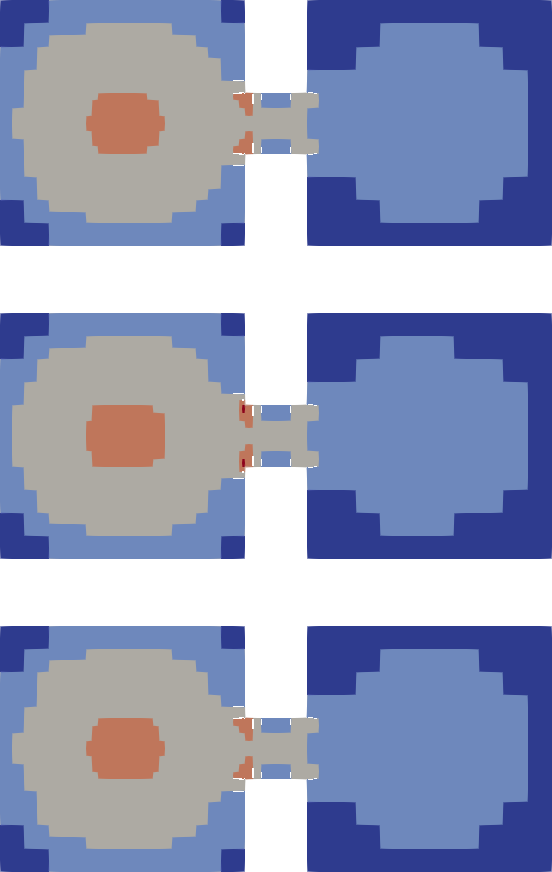}
  \caption{Mesh pattern comparison on the dumbbell problem in 2D, with ten adaptive refinements. Top: full A-PINVIT (27.414 cells on final level), center: SA-PINVIT with GMG$^1$(Jacobi$^1$) (27.972 cells on final level), bottom: SA-PINVIT with GMG$^3$(Jacobi$^1$) (27.318 cells on final level). The plot shows higher levels of refinement in red, in the intermediate step number five.}
  \label{fig:mesh_comparison_dumbbell_2d}
\end{figure}

\begin{figure}
  \includegraphics[width=\textwidth]{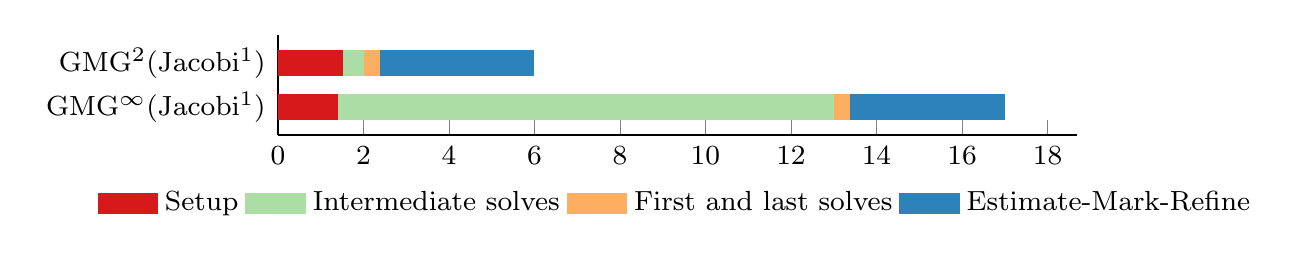}
  \caption{Computational cost of A-PINVIT and SA-PINVIT for the dumbbell problem in 2D. The timing is in seconds, and the dimension of the problem goes to $O(10^{6})$ degrees of freedom.}
  \label{fig:dumbbell_2d_deg1_cost}
\end{figure}

\subsection{Fichera corner in three dimensions}\label{subsec:fichera_3d}

A more challenging test case is given by a three-dimensional version of the Fichera corner, where we extrude the two dimensional L-shaped domain along the $z-$direction. Similarly to what we did in the two dimensional case, we apply the error estimator on the first eigenfunction/eigenvalue pair, and we plot the error in the first six lowermost eigenalues in Figure~\ref{fig:fichera_3d_deg1_all_eigenvalues}. In the three-dimensional case, the convergence of the eigenvalues is much more oscillating, and while the computational saving in the intermediate stage is similar to the two dimensional case, the overall computational cost in this case is dominated by the ESTIMATE-MARK-REFINE steps, rather than by the solution stages (see Figure~\ref{fig:fichera_3d_deg1_cost}).

\begin{figure}
  \includegraphics[width=\textwidth]{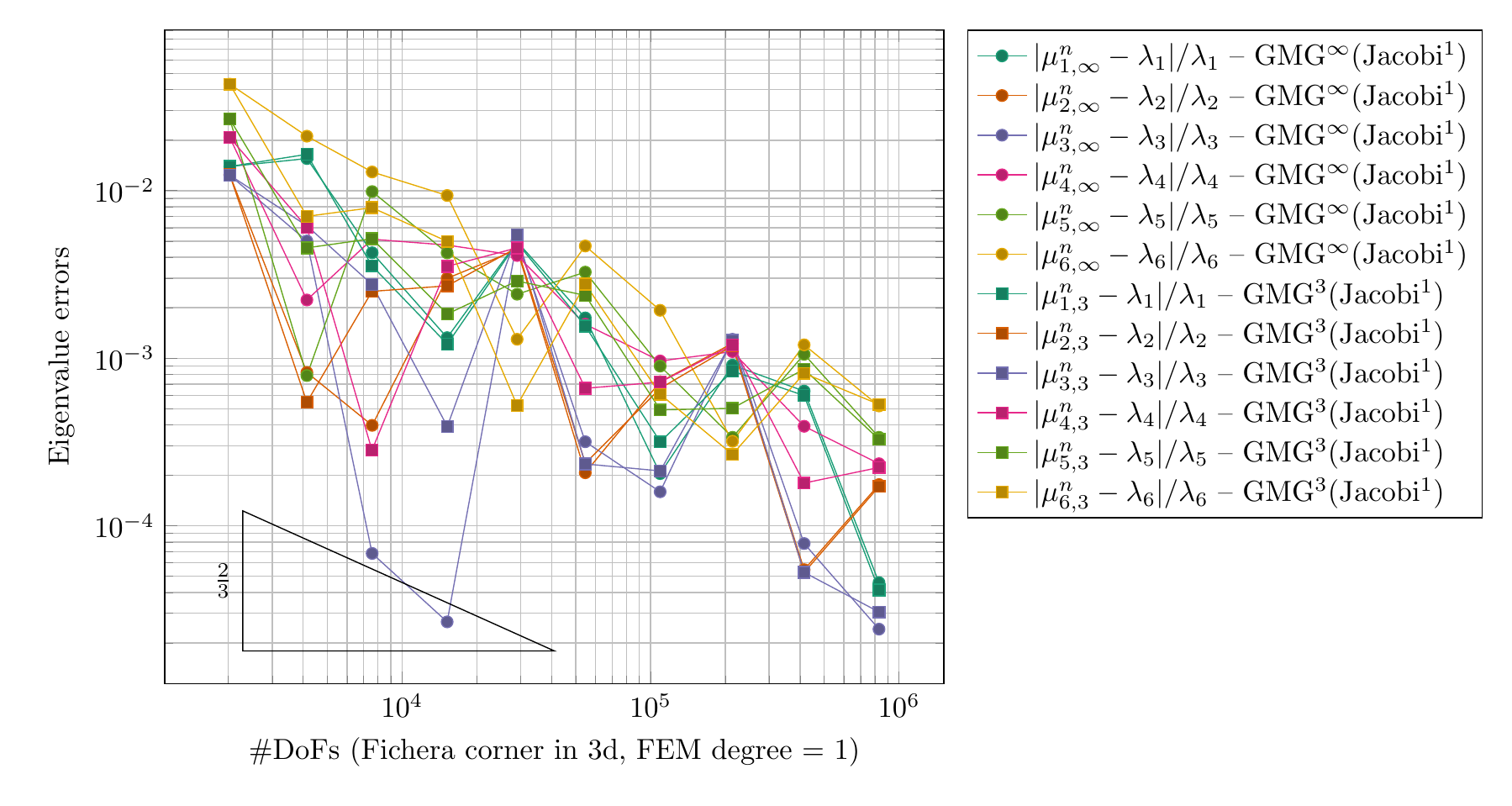}
  \caption{Error in the first six lowermost eigevnalues for SA-PINVIT vs A-PINVIT, Fichera corner problem in 3D. The estimator is computed w.r.t. the first eigenvalue and eigenvector pair.
  The comparison shows pure A-PINVIT (using as preconditioner a geometric multigrid v-cycle iteration with one cycle of Jacobi iteration as internal smoother, indicated with GMG$^\infty$(Jacobi(1)), where the $\infty$ is there to indicate that we iterate until convergence to a tolerance of $10^{-12}$), and SA-PINVIT based on the application of three iterations of the same v-cycle algorithm.}
  \label{fig:fichera_3d_deg1_all_eigenvalues}
\end{figure}

\begin{figure}
  \includegraphics[width=\textwidth]{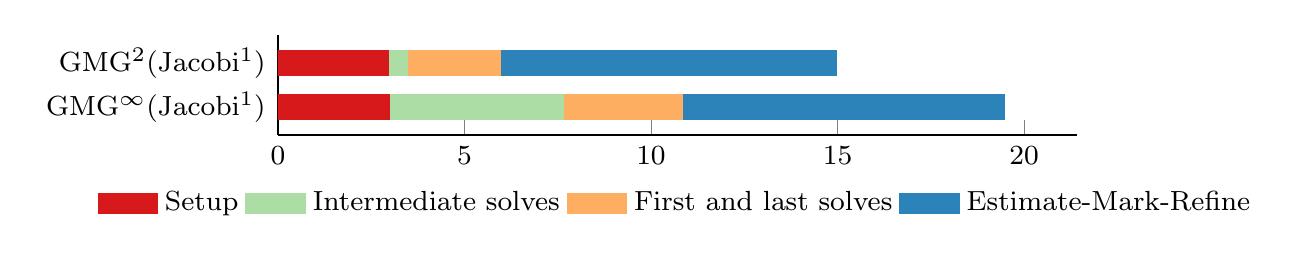}
  \caption{Computational cost of A-PINVIT and SA-PINVIT for the Fichera corner problem in 3D. The timing is in seconds, and the dimension of the problem goes to $O(10^{6})$ degrees of freedom.}
  \label{fig:fichera_3d_deg1_cost}
\end{figure}

A comparison of the mesh sequences in the three-dimensional case is difficult to visualize. We show a snapshot of the grid resulting from four steps of SA-PINVIT iteration in Figure~\ref{fig:fichera_3d_deg1_grid}.

\begin{figure}
  \centering
  \includegraphics[width=.8\textwidth]{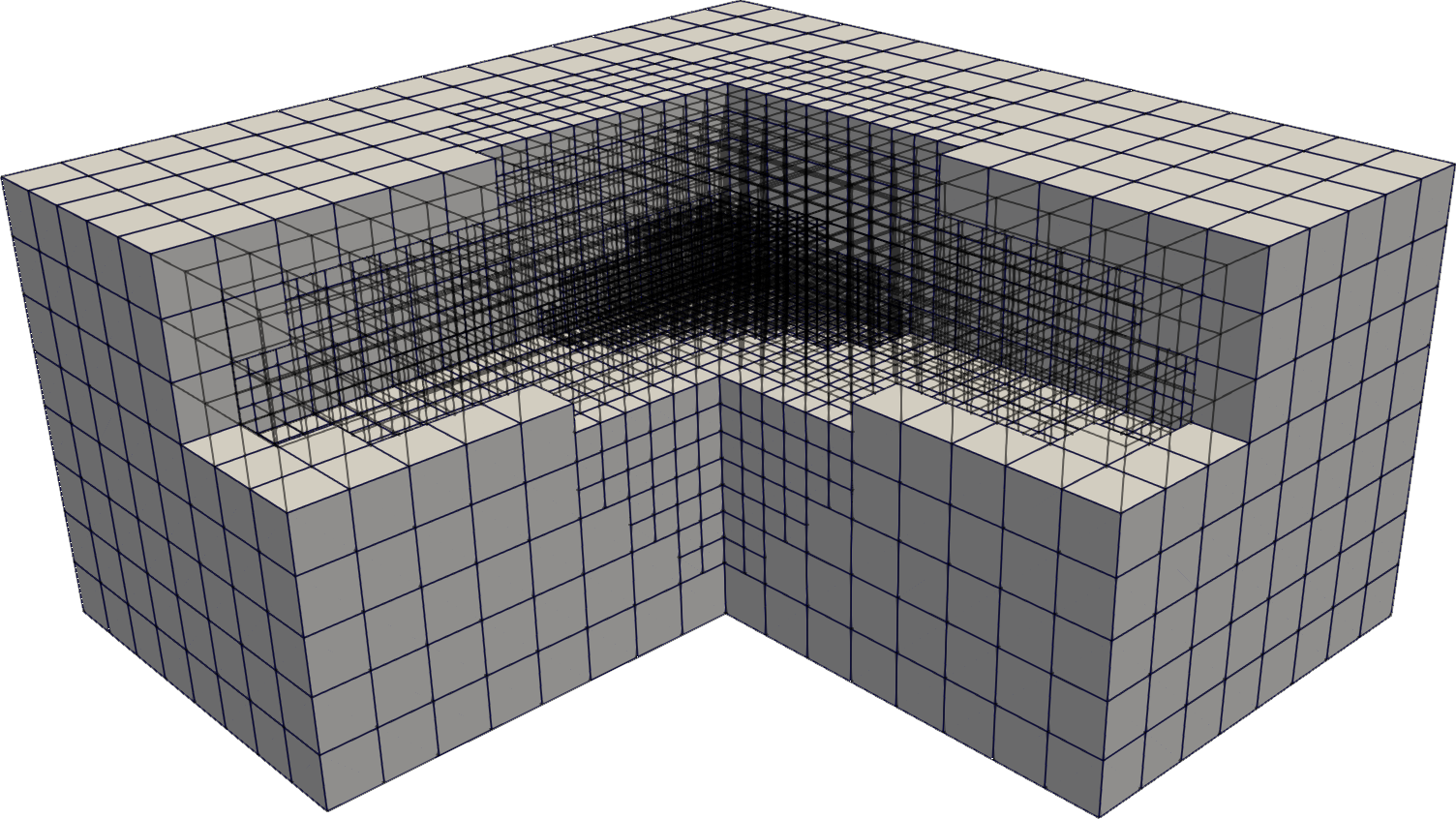}
  \caption{Local refinement after four cycles for SA-PINVIT iteration for the Fichera corner problem in 3D. Part of the grid is left as a transparent wireframe to show the internal local refinements.}
  \label{fig:fichera_3d_deg1_grid}
\end{figure}

\subsection{Dumbbell in three dimensions}\label{subsec:dumbbell_3d}

Similarly to what we did for the Fichera corner, we study now the three dimensional version of the dumbbell problem presented in Section~\ref{subsec:dumbbell_2d}. In this case, the convergence is even more oscillatory (Figure~\ref{fig:dumbbell_3d_deg1_all_eigenvalues}). This is the most challenging problem for SA-PINVIT, since local refinement breaks the symmetry of the problem, and leads to a mixing of the first two eigenpairs along the refinement paths. 

This is a known challenge of the design of cluster robust mesh refinement strategies. Namely, in the case of multiple or tightly clustered eigenvalues, one needs to devise basis independent error estimators. One of the consequences of this feature of the eigenvalue problem in the presence of multiple eigenvalues is that when using algebraic algorithms based on random starts and restarts (Arnoldi, Krylov Schur, ...) one might get a different basis of the eigenspace associated with the same multiple eigenvalue in every new run of an algorithm on applied on the same input matrices. Instead an error estimator should depend on the subspace spanned by approximate eigenvectors, see \cite{GrubisicOvall,BGO}. However, such marking strategies flag many elements for refinement and as such would produce overly fine meshes. This goes directly against the very principle behind the design of the current algorithm. Therefore we have opted to let the multiplicity, or near multiplicity, be resolved asymptotically rather than resorting to the use of a safer but more aggressive subspace dependent mesh refinement strategy. On the dumbbell example, we clearly see an echo of the challenge faced by the estimator in the presence of a tight cluster or eigenvalues. This is precisely the reason for choosing the dumbbell as a benchmark example.

This is evident from the local refinement pattern, as shown in Figure~\ref{fig:dumbbell_3d_deg1_grid}. A possible explanation for the large oscillations in the convergence of the eigenvalues in Figure~\ref{fig:dumbbell_3d_deg1_all_eigenvalues} is given by the fact that, while advancing with the local refinement, the first two eigen pairs exchange their relative order, and the estimator is applied to a different eigenfunction w.r.t. what was done in the previous refinement cycle, resulting in a loss of performance on the desired eigenfunction.

\begin{figure}
  \includegraphics[width=\textwidth]{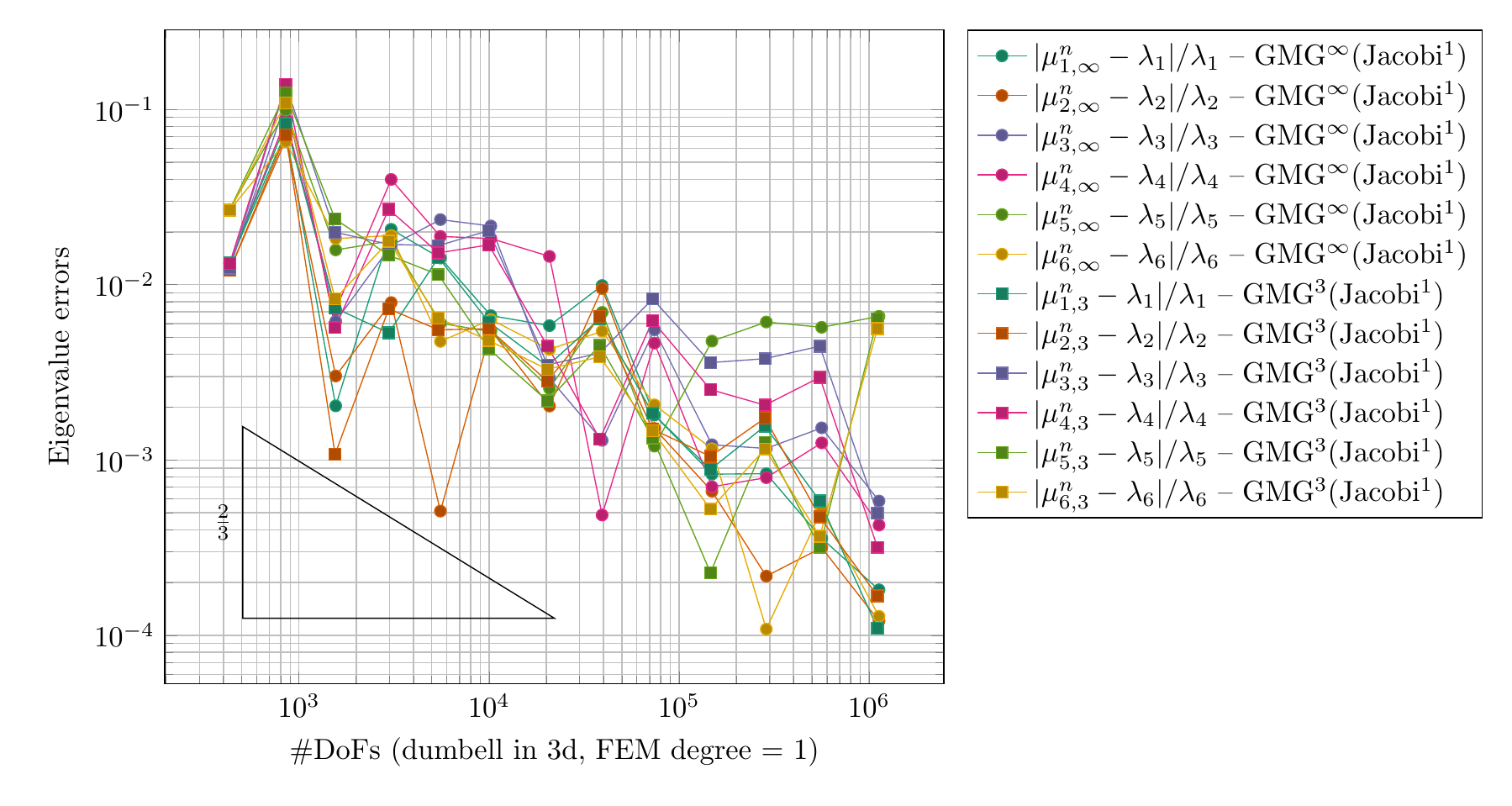}
  \caption{Error in the first six lowermost eigevnalues for SA-PINVIT vs A-PINVIT, dumbbell problem in 3D, finite element degree equal to 1. The estimator is computed w.r.t. the first eigenvalue and eigenvector pair.
  The comparison shows pure A-PINVIT (using as preconditioner a geometric multigrid v-cycle iteration with one cycle of Jacobi iteration as internal smoother, indicated with GMG$^\infty$(Jacobi(1)), where the $\infty$ is there to indicate that we iterate until convergence to a tolerance of $10^{-12}$), and SA-PINVIT based on the application of three iterations of the same v-cycle algorithm.}
  \label{fig:dumbbell_3d_deg1_all_eigenvalues}
\end{figure}

Such difficulty is also evident in the overall computational cost presented in Figure~\ref{fig:dumbbell_3d_deg1_cost}, where it is clear that the last solution does not benefit at all from the final prolongation of the solution from the intermediate SA-PINVIT steps. This numerical experiments exposes an area where it is necessary to improve the SA-PINVIT algorithm: very close but distinct eigenvalues may interchange during local refinement, and it requires that the whole block of close eigenvalues is taken into account when implementing a refinement strategy, or the benefit of SA-PINVIT may get lost due to an excessive cost in the last (finest) iteration, as Figure~\ref{fig:dumbbell_3d_deg1_cost} suggests.

\begin{figure}
  \includegraphics[width=\textwidth]{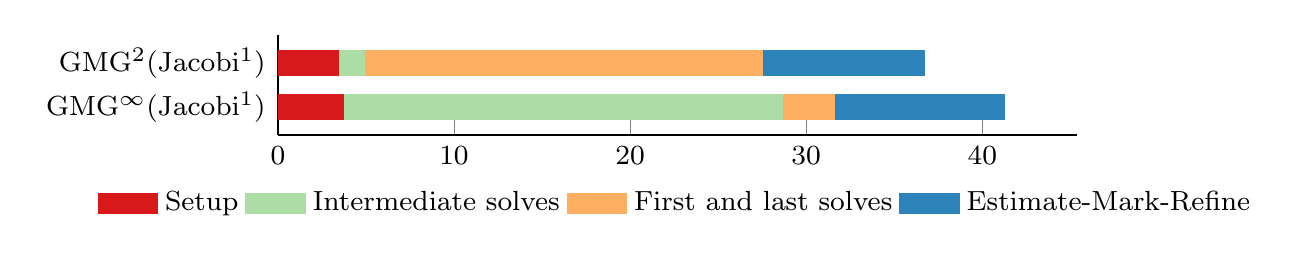}
  \caption{Computational cost of A-PINVIT and SA-PINVIT for the dumbbell problem in 3D, with finite element degree equal to 1. The timing is in seconds, and the dimension of the problem goes to $O(10^{6})$ degrees of freedom.}
  \label{fig:dumbbell_3d_deg1_cost}
\end{figure}

\begin{figure}
  \centering
  \includegraphics[width=.8\textwidth]{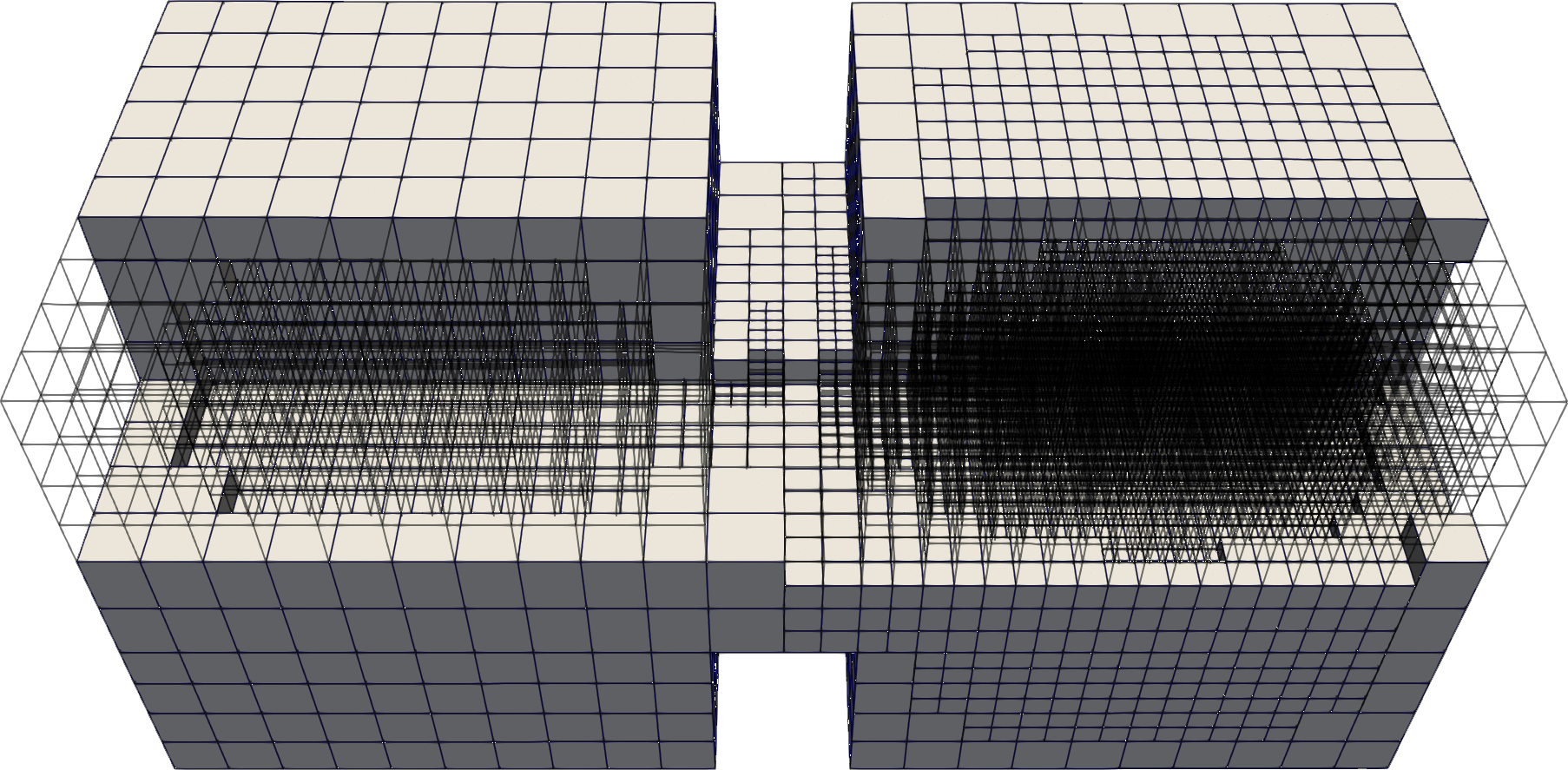}
  \caption{Local refinement after four cycles for SA-PINVIT iteration in the dumbbell problem in 3D, with finite element degree equal to 1. Part of the grid is left as a transparent wireframe to show the internal local refinements.}
  \label{fig:dumbbell_3d_deg1_grid}
\end{figure}

\section{Concluding remarks}\label{sec:summary}
In this paper we present an eigensolver designed for AFEM spaces aimed to increase the efficiency of an algorithm when applied to a sequence of locally refined meshes. The method which we proposed is motivated by the work on inexact inverse iteration solvers in numerical linear algebra and recent work in the perturbed iterative methods and on smoothed-AFEM for source problems. Following work on inexact inverse iteration we see that such perturbed hybrid iterative methods converge under very mild assumptions on the control of the residuals of the intermediate inexact solvers (convergence of residual norms to zero, see \cite{Saad16}). 

The performed experiments demonstrate that we achieve quasi-optimal convergence rates. We have explicitly shown the intricate relationship between various stages of the adaptive process (Setup, Intermediate solves, First and last solves, Estimate-Mark-Refine). The experiments were designed to push the boundary and ascertain the influence of the spectral separation on various stages of the process (these effects were more pronounced for problems with broken symmetries and in 3D). The change of the relative significance of the cost of various stages has been compared for, in this respect, challenging 2D and 3D experiments.

\section*{Acknowledgements} L.G. was supported by the Croatian Science Foundation grant HRZZ IP-2019-04-6268. L.G. is also thankful to the hospitality of the research visit to Scuola Internazionale Superiore di Studi Avanzati where the work has started. O.M. is thankful to the University of Zagreb for the hospitality during her collaborative research visit there. L.H. was partially supported by the National Research Projects (PRIN  2017) `'`Numerical Analysis for Full and Reduced Order Methods for the efficient and accurate solution of complex systems governed by Partial Differential Equations'', funded by the Italian Ministry of Education, University, and Research.

\printbibliography
\end{document}